\DeclareMathOperator{\C}{C}
\DeclareMathOperator{\cop}{cop}
\DeclareMathOperator{\G}{G}
\DeclareMathOperator{\M}{M} \DeclareMathOperator{\Mor}{Mor}
\DeclareMathOperator{\B}{B} 
 \DeclareMathOperator{\Aut}{Aut}
\DeclareMathOperator{\W}{L}
\DeclareMathOperator{\X}{X}
\DeclareMathOperator{\Ad}{Ad}
\DeclareMathOperator{\Ind}{Ind}
\DeclareMathOperator*{\tens}{\otimes}
\newtheorem{twr}{Theorem}[section]
\newtheorem{lem}[twr]{Lemma}
\newtheorem{stwr}[twr]{Proposition}
\theoremstyle{definition}
\newtheorem{defin}[twr]{Definition}
\newtheorem{uw}[twr]{Remark}
\newcommand{\id}{{\rm id}}
\newcommand{\mbG}{\mathbb{G}}
\newcommand{\mbX}{\mathbb{X}}
\newcommand{\mbD}{\mathbb{D}}
\newcommand{\mbB}{\mathbb{B}}
\begin{document}
\subjclass{Primary 46L89, Secondary 58B32, 22D25}
\title[]{Rieffel Deformation of Homogeneous Spaces}
\author{P.~Kasprzak}
\address{Department of Mathematical Sciences, University of Copenhagen}
\address{On leave from Department of Mathematical Methods in Physics, Faculty of Physics, Warsaw University}
\thanks{Supported by the Marie Curie Research Training Network Non-Commutative Geometry MRTN-CT-2006-031962}
\thanks{Supported by Geometry and Symmetry of Quantum Spaces,
PIRSES-GA-2008-230836}
\email{pawel.kasprzak@fuw.edu.pl}
\subjclass[2000]{Primary 46L89, Secondary 20N99}
\begin{abstract}
Let $\G_1\subset \G$ be a closed subgroup of a locally compact group $\G$ and let $\X=\G/\G_1$ be  the quotient space of left cosets. Let $\mbX=(\C_0(\X),\Delta_{\X})$ be the corresponding $\mbG$-$\C^*$-algebra where $\mbG=(\C_0(\G),\Delta)$. Suppose that $\Gamma$ is a closed  abelian subgroup of $\G_1$ and let $\Psi$ be a $2$-cocycle on the dual group $\Hat\Gamma$. Let $\mbG^\Psi$ be the Rieffel deformation of $\mbG$. Using the results of paper \cite{Kasp2} we may construct $\mbG^\Psi$-$\C^*$-algebra  $\mbX^\Psi$ - the Rieffel deformation of $\mbX$. On the other hand we may perform the Rieffel deformation of the subgroup  $\G_1$ obtaining the closed quantum subgroup $\mbG^\Psi_1\subset\mbG^\Psi$, which in turn, by the results of \cite{Vaes}, leads to the  $\mbG^\Psi$-$\C^*$-algebra $\mbG^\Psi/\mbG_1^\Psi$. In this paper we show that  $\mbG^\Psi/\mbG_1^\Psi\cong\mbX^\Psi$. We also consider the case where $\Gamma\subset \G$ is not a subgroup of $\G_1$, for which we cannot construct the subgroup $\mbG^\Psi_1$. Then $\mbX^\Psi$ cannot be identified with a quantum quotient. What may be shown is that  it is a $\mbG^\Psi$-simple object in the category of $\mbG^\Psi$-$\C^*$-algebras. \end{abstract}
\maketitle \tableofcontents
\begin{section}{Introduction}
The theory of locally compact quantum groups (LCQG) has already reached its maturity. Almost ten years have passed since the appearance of the seminal paper of J. Kustermans and S. Vaes \cite{KV}, where  the axiomatic theory was formulated. A locally compact quantum group is a $\C^*$-bialgebra  $(A,\Delta)$ equipped with a left and a right Haar weight $\phi$ and $\psi$. Imposing some natural conditions on $\Delta$ (weak cancellation) and on the Haar weights $\phi$ and $\psi$ (KMS-type conditions) the authors were able to develop the theory, proving among other things the existence of the coinverse  which admits the polar decomposition  $\kappa=R\circ\tau_{i/2}$ and showing that the theory is self dual. The assumption of the existence of the Haar weights, which is a theorem for the classical groups and for the compact quantum groups, may be perceived as a drawback.  It seems that a Haar weights free axiomatization is out of reach. There exists second formulation of the LCQG theory, due to T. Masuda, Y. Nakagami and S.L. Woronowicz \cite{MNW}, in which the authors include the existence of the coinverse $\kappa$ in the axioms. But to develop the theory they still need to assume the existence of a left Haar weight. In fact, it can be shown that both theories are equivalent.

One way of constructing examples of  LCQGs is to start with a classical group $\G$ and search for its deformations $\mbG_q$. In general,  the link between $\G$ and $\mbG_q$ is not rigorously described. There is at least one mathematical procedure - the Rieffel deformation - where this correspondence is clear. For the original approach of Rieffel we refer to \cite{Rf1}. In this paper we shall use our recent approach to the Rieffel deformation which describes it in terms of crossed product construction  (see \cite{Kasp}). The Rieffel deformation of $\G$ will be denoted by $\mbG^\Psi$ and its dual will be denoted by $\widehat{\mbG}^\Psi$. The deformation procedure of a locally compact group in terms of the transition from $\widehat\mbG$ to $\widehat{\mbG}^\Psi$ is described in Section \ref{twgr}. 

Let $\G$ be a locally compact group and $D$ a $\G$ - $\C^*$-algebra. Using the results of \cite{Kasp2} one may apply the Rieffel deformation to $D$, obtaining
 the deformed $\mbG^\Psi$ - $\C^*$-algebra $D^\Psi$. The concise account of the deformation procedure of $\G$ - $\C^*$-algebras is the subject of Section \ref{rco}. In Section \ref{prelimin} beside giving some preliminaries on $\mbG$-$\C^*$-algebras we also discuss the  notion of a quantum homogeneous space and the $\C^*$-algebraic quotient space $\mbG/\mbG_1$, a construction due to Vaes \cite{Vaes}. His construction may be performed for any closed quantum subgroup $\mbG_1$ of a regular LCQG $\mbG$ - for regularity we refer to \cite{BS}. The relation of $\mbG/\mbG_1$ with the induction procedure of the regular corepresentation is explained. The reader's awareness of this relation is crucial in the understanding of the  proof that $\mbX^\Psi\cong\mbG^\Psi/\mbG_1^\Psi$. In Section \ref{indregsec} we perform the induction procedure of the regular corepresentation $W_1^\Psi$ of the twisted group $\mbG_1^\Psi$ and compare the resulting objects with their untwisted counterparts. 
This enables us to prove that $\mbX^\Psi\cong\mbG^\Psi/\mbG_1^\Psi$ which is the subject of Section \ref{rdhsec}. Finally, in  Section \ref{nonq} we comment on the case where $\mbX^\Psi$ is not of the quotient type. In connection with it we show that  the Rieffel deformation of a $\mbG$-simple $\C^*$-algebra is $\mbG^\Psi$-simple.

The particular case of the Rieffel deformation of a homogeneous space  has been discussed in \cite{Varr}. In this paper J. Varilly treats the situation where there is given a pair $\Gamma\subset \G_1\subset \G$ of closed subgroups with $\Gamma$ being abelian and $\G$ compact. He  shows that it is possible to perform a covariant deformation of $\X=\G/\G_1$ obtaining a quantum homogeneous $\mbG^\Psi$-space $\mbX^\Psi$. In this specific situation the difficulties that one encounters in general do not manifest themselves.  

Throughout the paper we will freely use the language of
$\C^*$-algebras and the theory of locally compact quantum groups.
For the notion of multipliers and morphism of
$\C^*$-algebras we refer the reader to \cite{W4}.
For the theory of locally compact quantum groups we refer to
\cite{KV} and \cite{MNW}. 

I would like to express my gratitude to W. Szyma\'nski for many stimulating discussions, which greatly influenced the final form of this paper. 
\end{section}
\begin{section}{Preliminaries}\label{prelimin}
\begin{subsection}{$\mbG$-$\C^*$-category}\label{gcat}
Let us fix a notation related with  a locally compact quantum group $\mbG$. The $\C^*$-algebra and the comultiplication of $\mbG$ will be denoted by $\C_0(\mbG)$ and $\Delta_{\mbG}$ respectively. The von Neumann algebra associated with $\mbG$ and the Hilbert space obtained by the GNS-representation related with the left Haar weight  will be denoted respectively by $\W^\infty(\mbG)$ and $\W^2(\mbG)$. By $\widehat{\mbG}$ we shall denote the reduced version of the dual of $\mbG$. The modular conjugations related with the left Haar weight on $\mbG$ (on $\widehat\mbG$) will be denoted by $J$ (by $\Hat J$).

The main subject of this paper is related with the quantum group actions on quantum spaces. The following definition may be traced back to \cite{Pod}. To formulate it we adopt the following notation: a closed linear span of a subspace $\mathcal{W}\subset\mathcal{V}$ of a Banach space $\mathcal{V}$ will be denoted by $[\mathcal{W}]$. 
\begin{defin}\label{concoact} Let $\mbG$ be a LCQG. A $\mbG$-$\C^*$-algebra is a  pair $\mbD=(D,\Delta_D)$ consisting of a $\C^*$-algebra $D$ and a coaction $\Delta_D\in\Mor(D,\C_0(\mbG)\otimes D)$:
\[(\iota\otimes\Delta_D) \Delta_D = (\Delta_{\mbG}\otimes\iota)\Delta_D \] such that $[\Delta_D(D)(\C_0(\mbG)\otimes 1)]=\C_0(\mbG)\otimes D$. The $\C^*$-algebra $D$ will also be denoted by $\C_0(\mbD)$ and the coaction $\Delta_D$ will be denoted by $\Delta_{\mbD}$.
\end{defin} 
\begin{uw}\label{clcoac} Suppose that $\mbG$ corresponds to an ordinary locally compact group $\G$. There is a 1-1 correspondence between $\mbG$-$\C^*$-algebras and $\G$-$\C^*$-algebras, i.e. $\C^*$-algebras equipped with a continuous action of $\G$. In order to describe it we introduce the characters $\chi_g:\C_0(\G)\rightarrow\mathbb{C}$ that are associated with the points of $\G$: $\chi_g(f)=f(g)$, for any $g\in\G$ and $f\in\C_0(\G)$. For ang $\mbG$-$\C^*$-algebra $\mbD$ we define the corresponding continuous action $\alpha:\G\rightarrow\Aut(\C_0(\mbD))$ by the formula: $\alpha_g(a)=(\chi_{g^{-1}}\otimes\iota)\Delta_{\mbD}(a)$ for any $g\in\G$ and  $a\in\C_0(\mbD)$. 
\end{uw}
The category of $\G$-$\C^*$-algebras was consider either explicitly or implicitly in many different contexts - see for instance \cite{KQ} where it appears explicitly in the context of the Landstad duality.
In order to specify the category of $\mbG$-$\C^*$-algebras we adopt the following notion of $\mbG$-morphisms.
\begin{defin}\label{Gmor}
Let $\mbG$ be a LCQG and suppose that $\mathbb{B}$ and $\mbD$ are $\mbG$-$\C^*$-algebras. We say that a morphism $\pi:\C_0(\mbB)\rightarrow\C_0(\mbD)$ is a $\mbG$-morphism from $\mbB$ to $\mbD$ if $\Delta_{\mbD}\circ\pi=(\iota\otimes\pi)\circ\Delta_{\mbB}$. The set of $\mbG$-morphism from $\mbB$ to $\mbD$  will be denoted by $\Mor_{\mbG}(\mathbb{B},\mbD)$.
\end{defin}
\end{subsection}
\begin{subsection}{Quantum homogeneous spaces}\label{qhs} Let $\mbG$ be a locally compact quantum group and $\mbX$ be a $\mbG$ - $\C^*$-algebra.
The concept of a quantum homogeneous $\mbG$-space exists so far  only in the case of $\mbG$ being compact  and $\C_0(\mbX)$ being unital - see Definition 1.8 of \cite{Pod1}. In this case $\mbX$ is a quantum homogeneous space  if  $\Delta_{\mbX}$ is ergodic. Let $\mbG$ be a compact quantum group corresponding to a compact group $\G$. It may be checked that there is a 1-1 correspondence between  quantum homogeneous $\mbG$-spaces  with the underlying commutative $\C^*$-algebra $\C_0(\mbX)$ and the classical homogeneous $\G$-spaces. Unfortunately the ergodicity assumption in the non-compact case cannot serve as a replacement for homogeneity. The next approximation to the homogeneity is the notion of $\mbG$-simplicity:
\begin{defin}\label{homspdef} Let $\mbD$ be $\mbG$-$\C^*$-algebra. We say that $\mbD$ is $\mbG$-simple if for any $\mbG$-$\C^*$-algebra  $\mathbb{B}$ and any $\mbG$-morphism $\pi\in\Mor_{\mbG}(\mbD,\mbB)$ we have $\ker\pi=\{0\}$.  
\end{defin} 
In order to motivate the introduction the $\mbG$-simplicity let us prove that in the  case of the compact quantum groups the homogeneity of $\mbD$ implies its $\mbG$-simplicity.
\begin{lem}
Let $\mbG=(A,\Delta)$ be a compact quantum group with a faithful Haar measure $h$. Let $\mbX$ be a quantum homogeneous space with $\Delta_{\mbX}$ being injective. Then $\mbX$ is $\mbG$-simple.
\end{lem}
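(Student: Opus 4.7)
The plan is to construct an invariant state $\omega$ on $\C_0(\mbX)$ using the Haar measure, show $\omega$ is faithful using both hypotheses, and then use $\omega$ to exclude a nontrivial kernel for any $\mbG$-morphism out of $\mbX$. For the first step, combining the coaction identity with left invariance of $h$ gives
\begin{equation*}
\Delta_\mbX\bigl((h\otimes\iota)\Delta_\mbX(a)\bigr) = (h\otimes\iota\otimes\iota)(\Delta\otimes\iota)\Delta_\mbX(a) = 1\otimes(h\otimes\iota)\Delta_\mbX(a),
\end{equation*}
so $(h\otimes\iota)\Delta_\mbX(a)$ is $\Delta_\mbX$-invariant. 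Ergodicity then provides a state $\omega$ on $\C_0(\mbX)$ characterized by $(h\otimes\iota)\Delta_\mbX(a) = \omega(a)\cdot 1$ for every $a\in\C_0(\mbX)$.

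The central step is to verify that $\omega$ is faithful, and this is where both hypotheses enter. First I would show that the slice $h\otimes\iota\colon A\otimes\C_0(\mbX)\to\C_0(\mbX)$ is faithful on positive elements: if $T\geq 0$ and $(h\otimes\iota)(T)=0$, then for every positive functional $\sigma$ on $\C_0(\mbX)$ the element $(\iota\otimes\sigma)(T)\in A$ is positive with $h\bigl((\iota\otimes\sigma)(T)\bigr)=0$, hence vanishes by faithfulness of $h$; since positive functionals span $\C_0(\mbX)^*$ and slice maps separate the minimal tensor product, one gets $T=0$. Applying this to $T = \Delta_\mbX(a)^*\Delta_\mbX(a)$ shows that $\omega(a^*a)=0$ forces $\Delta_\mbX(a)=0$, and injectivity of $\Delta_\mbX$ then yields $a=0$.

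To finish, let $\pi\in\Mor_\mbG(\mbX,\mbB)$ and take $a\in\ker\pi$. The intertwining property $\Delta_\mbB\circ\pi=(\iota\otimes\pi)\circ\Delta_\mbX$ gives $(\iota\otimes\pi)\Delta_\mbX(a^*a)=0$; applying $h\otimes\iota$ and using the relation $h\otimes\pi=\pi\circ(h\otimes\iota)$ yields $\omega(a^*a)\,\pi(1)=0$. Because $\C_0(\mbX)$ is unital and morphisms of $\C^*$-algebras in this setting are non-degenerate, $\pi(1)$ is a nonzero projection in $\M(\C_0(\mbB))$, so $\omega(a^*a)=0$, and faithfulness of $\omega$ gives $a=0$. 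Hence $\ker\pi=\{0\}$.

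The main obstacle I anticipate is the verification that $h\otimes\iota$ is faithful on positive elements; this is the point where faithfulness of $h$ has to be coupled with the separation properties of the minimal tensor product. Once that is in place, the injectivity hypothesis on $\Delta_\mbX$ is exactly what turns the vanishing $\Delta_\mbX(a)=0$ into $a=0$, and the concluding $\mbG$-simplicity argument reduces to a short intertwining computation.
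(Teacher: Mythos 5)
Your argument is correct and follows essentially the same route as the paper: you build the invariant state $\omega$ (the paper's $\rho$) from the Haar measure via ergodicity, prove it faithful from the faithfulness of $h$ and the injectivity of $\Delta_{\mbX}$, and then apply the intertwining relation to kill $\ker\pi$. The only difference is that you spell out the slice-map argument for the faithfulness of $h\otimes\iota$ on positive elements, a step the paper asserts without proof.
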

\begin{proof} Using the ergodicity of $\Delta_{\mbX}$ we may introduce the state on $\rho:\C_0(\mbX)\rightarrow\mathbb{C}$ such that \begin{equation}\label{rodef}(h\otimes\iota)\Delta_{\mbX}(a)=\rho(a) \mbox{ for any } a\in \C_0(\mbX).\end{equation} The faithfulness of $h$ and the injectivity of $\Delta_{\mbX}$ imply the  faithfulness of $\rho$. 
 Let $\pi\in\Mor_{\mbG}(\mbX,\mathbb{B})$ and suppose that $a\in\ker\pi$. Note that $(h\otimes\iota)\Delta_{\mbX}(a^*a)\in\ker\pi$: \[\pi((h\otimes\iota)\Delta_{\mbX}(a^*a))=(h\otimes\iota)\Delta_{\mbX}(\pi(a^*a))=0.\] On the other hand, using \eqref{rodef} we may see that $\pi((h\otimes\iota)\Delta_{\mbX}(a^*a))=\rho(a^*a)1$ which together with the above computation shows that $\rho(a^*a)=0$. The faithfulness of $\rho$ implies that $a=0$ hence $\ker\pi=\{0\}$.
\end{proof}
Let $\G$ be a locally compact quantum group and let $\mbG$ be the corresponding LCQG. 
It may be checked that there is a 1-1 correspondence between $\mbG$-simple $\C^*$-algebras with the  underlying $\C^*$-algebra being commutative  and minimal $\G$ - spaces - the $\G$-spaces with all orbit being dense (see p.49, \cite{DS}).  

As was already mentioned, a definition of a quantum homogeneous space appropriate for locally compact quantum groups is not yet known. 
What has been generalized so far is the notion of the quantum quotient space, due to Vaes \cite{Vaes}.  
But since the work of Podle\'s \cite{Pod} on quantum spheres we know that a generic quantum homogeneous space is not of the quotient type. The Rieffel of the homogeneous spaces deformation provides a new class of examples of non-compact  quantum spaces which in our opinion should also be considered as homogeneous and which generically are not of the quotient type. 

Let us move on to the discussion of the quantum quotient spaces. 
Let $\mbG$ be a regular LCQG (for the notion of regularity we refer to \cite{BS}). Let $\mbG_1$ be a closed quantum subgroup in the sense of Definition 2.5 \cite{Vaes}. As a part of the structure we have  the injective normal $*$-homomorphism  $\hat\pi:\W^\infty(\widehat{\mbG}_1)\rightarrow\W^\infty(\widehat{\mbG})$. Its commutant counterpart is denoted by $\hat\pi':\W^\infty(\widehat{\mbG}_1)'\rightarrow\W^\infty(\widehat{\mbG})'$. The definition of the measurable  quotient space $\W^\infty(\mbG/\mbG_1)$ goes as follows:
\[\W^\infty(\mbG/\mbG_1)=\{a\in \W^\infty(\mbG):a\hat\pi'(x)=\hat\pi'(x)a,\mbox{ for any }x\in\W^\infty(\widehat{\mbG}_1)'\}.\] A remarkable Theorem 6.1 \cite{Vaes}) provides  the existence and the uniqueness of the $\C^*$-algebraic version $\C_0(\mbG/\mbG_1)$ of $\W^\infty(\mbG/\mbG_1)$. For the needs of this paper we shall formulate it as a definition:
\begin{defin}\label{cqs}Let $\mbG$ be a LCQG which is regular and let $\mbG_1$ be its closed quantum subgroup. The $\C^*$-algebraic quotient $\mbG/\mbG_1$ is the unique $\mbG$-$\C^*$-algebra defined by the following conditions:
\begin{itemize}
\item $\C_0(\mbG/\mbG_1)\subset\W^\infty(\mbG/\mbG_1)$ is a strongly dense $\C^*$-subalgebra;
\item   $\Delta_{\mbG/\mbG_1}$ is given by the restriction of  $\Delta_{\mbG}$ to $\C_0(\mbG/\mbG_1)$;
\item  $\Delta_{\mbG}(\W^\infty(\mbG/\mbG_1))\subset\M(\mathcal{K}(\W^2(\mbG))\otimes \C_0(\mbG/\mbG_1))$ and the $*$-homomorphism \[\Delta_{\mbG/\mbG_1}:\W^\infty(\mbG/\mbG_1)\rightarrow\mathcal{L}(\W^2(\mbG)\otimes\C_0(\mbG/\mbG_1))\mbox{ is strict.}\]  
\end{itemize}
\end{defin}
For the notion of strictness we refer to Definition 3.1 of \cite{Vaes}. 
Note that the symbol  $\Delta_{\mbG/\mbG_1}$ denotes the coaction on $\C_0(\mbG/\mbG_1)$ as well as the strict map defined on $\W^\infty(\mbG/\mbG_1)$.
\begin{uw}\label{expr}
The difficulty of the proof of Theorem 6.1, \cite{Vaes} lies in the existence part. To explain the idea of the proof (which will be also important to understand this paper) let us consider the $\C^*$-algebra $\C_0(\widehat{\mbG}_1)$ treated as a Hilbert $\C^*$-module over itself. Performing the induction procedure of the regular corepresentation $W_1\in\M(\C_0(\mbG_1)\otimes\C_0(\widehat{\mbG}_1))$ we get the induced Hilbert module $\Ind(\C_0(\widehat{\mbG}_1))$ equipped with the additional structure: the induced corepresentation $\Ind(W_1)$ and the coaction $\gamma:\Ind(\C_0(\widehat{\mbG}_1))\rightarrow\M(\Ind(\C_0(\widehat{\mbG}_1))\otimes\C_0(\widehat{\mbG}))$ of the dual quantum group $\widehat{\mbG}$. The coaction  $\gamma$ is obtained by the induction procedure applied to the right coaction $\beta:\C_0(\widehat{\mbG}_1)\rightarrow\M(\C_0(\widehat{\mbG}_1)\otimes\C_0(\widehat{\mbG}))$,  $\beta(a)=(\iota\otimes\hat\pi)\Delta_{\widehat{\mbG}_1}(a)$ for any $a\in\C_0(\widehat{\mbG}_1)$.  The additional structure consisting of $\gamma$ and $\Ind(W_1)$ enables one to prove that the $\C^*$-algebra of compact operators $\mathcal{K}(\Ind(\C_0(\widehat{\mbG}_1))$ on $\Ind(\C_0(\widehat{\mbG}_1))$ is canonically isomorphic with the $\C^*$-algebra of a crossed product. The coaction $\gamma$ is identified with the dual coaction on the crossed product while $\Ind(W_1)$ is identified with the corepresentation implementing the coaction on the $\gamma$-invariants. The $\C^*$-algebra $\C_0(\mbG/\mbG_1)$ is defined as the Landstad-Vaes $\C^*$-algebra of  $\gamma$-invariants,  by which  we mean the $\C^*$-algebra satisfying the conditions of Theorem 6.7, \cite{Vaes}. 
\end{uw}
\end{subsection} 
\end{section}
\begin{section}{Group algebra twist}\label{twgr}
Let $\G$ be a locally compact group and let $\delta:\G\rightarrow\mathbb{R}_+$ be  the modular function. Suppose that $\Gamma$ is a closed abelian subgroup of $\G_1$, which in turn is a closed subgroup of $\G$. For reasons which will become clear later we shall assume that the modular function  $\delta:\G\rightarrow\mathbb{R}_+$ when restricted to $\Gamma$ is identically $1$: $\delta(\gamma)=1$ for any $\gamma\in\Gamma$. 

In order to perform the twisting procedure one has to fix a $2$-cocycle $\Psi$ on $\Hat\Gamma$, i.e. a  continuous function
  $\Psi:\hat{\Gamma}\times
\hat{\Gamma}\rightarrow
 \mathbb{T}^1$ satisfying:
\begin{itemize}
\item[(i)] $\Psi(e,\hat\gamma)=\Psi(\hat\gamma,e)=1$ for all $\hat\gamma\in\Hat\Gamma$;
\item[(ii)]$\Psi(\hat{\gamma}_1,\hat{\gamma}_2+\hat{\gamma}_3)
\Psi(\hat{\gamma}_2,\hat{\gamma}_3)
=\Psi(\hat{\gamma}_1+\hat{\gamma}_2,\hat{\gamma}_3)\Psi(
\hat{\gamma}_1,\hat{\gamma}_2) $ for all
$\hat{\gamma}_1,\hat{\gamma}_2,\hat{\gamma}_3\in\Hat\Gamma$.
\end{itemize}
For the theory of $2$-cocycles we refer to \cite{kl}.

In order to simplify the notation and make further computations less cumbersome we shall assume that $\Psi$ is a skew symmetric bicharacter on $\Hat\Gamma$:
\begin{align}
\label{bicheq}\Psi(\hat\gamma_1+\hat\gamma_2,\hat\gamma_3)=&\Psi(\hat\gamma_1,\hat\gamma_3)\Psi(\hat\gamma_2,\hat\gamma_3),\\
\Psi(\hat\gamma_1,\hat\gamma_2)=&\overline{\Psi(\hat\gamma_2,\hat\gamma_1)}.
\end{align}
We will only prove our results for such $2$-cocycles although we have reasons to believe that they hold for arbitrary $2$-cocycles.

The role of the bicharacter $\Psi\in\M(\C_0(\Hat{\Gamma})\otimes\C_0(\Hat{\Gamma}))$ will vary in the course of this paper. The simplest variation is connected with the identification $\C_0(\Hat{\Gamma})\cong\C^*(\Gamma)$, which enables us to treat $\Psi$ as an element of $\M(\C^*(\Gamma)\otimes\C^*(\Gamma))$. Furthermore, the morphism $\iota\in\Mor(\C^*(\Gamma),\C^*_l(\G_1))$ which corresponds to the representation  $\Gamma\ni\gamma\mapsto L_\gamma\in\M(\C_l^*(\G_1))$ enables us to treat $\Psi$ as an element of $\M(\C^*_l(\G_1)\otimes\C^*_l(\G_1))$. Finally, in some cases $\Psi$  will be treated as an operator acting on $\W^2(\mbG_1)\otimes\W^2(\mbG_1)$ or $\W^2(\mbG)\otimes\W^2(\mbG)$.  

Let us describe the twisting procedure of $(\C_l^*(\G),\Hat\Delta)$. In what follows  we shall use the notation of Section \ref{gcat}, denoting  the locally compact quantum group related to the pair $(\C_l^*(\G),\Hat\Delta)$ by $\widehat\mbG$. In particular, $\C_0(\widehat\mbG)=\C_l^*(\G)$ and $\Delta_{\widehat\mbG}=\Hat\Delta$. We may twist the comultiplication on $\Delta_{\widehat\mbG}$ by means of $\Psi\in\M(\C_0(\widehat\mbG)\otimes\C_0(\widehat\mbG))$:
$\Delta_{\widehat\mbG^\Psi}(a)=\Psi^*\Delta_{\widehat\mbG}(a)\Psi$. Using Corollary 5.3 of \cite{V} we see that there exists a locally compact quantum group $\widehat\mbG^\Psi$ such that $\C_0(\widehat\mbG^\Psi)=\C_0(\widehat\mbG)$, for which the comultiplication is given by $\Delta_{\widehat\mbG^\Psi}$. (For more general results concerning the twist of a quantum group by a $2$-cocycle we refer to \cite{DC}.) Furthermore, with our modular assumption $\widehat\mbG^\Psi$ is of the Kac type - the coinverse $\kappa_{\widehat{\mbG}^\Psi}$ is an involutive anti-automorphism.  The dual locally compact quantum group of $\widehat\mbG^\Psi$ will be denoted by $\mbG^\Psi$. Its description in terms of the Rieffel deformation was given in \cite{Kasp}. It is also of the Kac type - $\kappa_{\mbG^\Psi}^2=\id$ - which by Example 3.4 \cite{BS} implies that $\mbG^\Psi$ is regular. The only reason for the assumption $\delta|_\Gamma=1$ is to ensure the regularity of $\mbG^\Psi$ which is important in the construction of the quotient of a locally compact quantum group by its closed quantum subgroup.

In what follows we shall give the formula for the multiplicative operators $W^\Psi\in\W^\infty(\mbG^\Psi)\otimes\W^\infty(\widehat\mbG^\Psi)$ and $\Hat{V}^\Psi\in\W^\infty(\mbG^\Psi)'\otimes\W^\infty(\widehat{\mbG}^\Psi)$ where we adopted the notation of Section 2.1 of \cite{Vaes}. In order to do it let us introduce an element $F\in\M(\C_0(\Hat\Gamma)\otimes\C_0(\Hat\Gamma))$: $F(\hat\gamma_1,\hat\gamma_2)=\Psi(\hat\gamma_2,\hat\gamma_1+\hat\gamma_2)$. Using Theorem 1 of \cite{FV} and the the properties of  $\Psi$ we may see that the operator $W^\Psi$ acts on $\B(\W^2(\mbG)\otimes \W^2(\mbG))$ and it is of the form $W^\Psi=\Psi W\Psi'$ where \begin{equation}\label{psipr}\Psi'=(\hat{J}\otimes J)F(\hat{J}\otimes J).\end{equation} The assumption of the co-stability introduced in Section 2.7 of \cite{FV} is satisfied trivially in the discussed case. The required group homomorphism $t\mapsto \hat\gamma_t$ is the trivial homomorphism: $\hat\gamma_t=e\in\Hat\Gamma$ (we replaced the $\gamma_t$ introduced in \cite{FV} by $\hat\gamma_t$  which is more appropriate in our context). 

To give the formula for the multiplicative unitary $\Hat{V}^\Psi$ we only have to invoke the fact that it is the fundamental multiplicative unitary corresponding to the co-opposite quantum group $\widehat\mbG^\Psi_{\cop}$. The comultiplication of $\widehat\mbG^\Psi_{\cop}$ is the flip of the comultiplication $\Delta_{\widehat\mbG^\Psi}$. By the skew-symmetry of $\Psi$ we get
\begin{equation}\label{hatV}
\Hat{V}^\Psi=\Psi^* \Hat{V}\Psi'^*.
\end{equation} 

The twist construction that we applied to  $\Hat\mbG$ can be also applied to $\Hat\mbG_1$ leading to $\widehat{\mbG}_1^\Psi$ and the dual $\mbG_1^\Psi$. Note that we do not impose the modular condition on the embedding $\Gamma\subset\G_1$. Again using Theorem 1 of \cite{FV} one may see that $W_1^\Psi$ and $\Hat{V}_1^\Psi$ are related  to $W_1$ and $V_1$ in the way analogous to the case of $\mbG$ described above (see Eq. \eqref{hatV}). 
The aforementioned assumption of co-stability is also satisfied but the homomorphism $t\mapsto\hat\gamma_t$ may be non-trivial. In order to see that, we shall use the modular function $\delta_1:\G_1\rightarrow\mathbb{R}_+$. For any $t\in\mathbb{R}$ we may define the character $\hat\gamma_t\in\Hat\Gamma$ given by $\langle\hat\gamma_t,\gamma\rangle=\delta_1^{it}(\gamma)$. The homomorphism $t\mapsto\hat\gamma_t$ is the one that ensures the co-stability.
It may be checked that the quantum group $\mbG_1^\Psi$ is a closed quantum subgroup of $\mbG^\Psi$ in the sense of Definition 2.5 of \cite{Vaes}. In particular, there exists the embedding $\hat\pi:\W^\infty(\widehat\mbG_1^\Psi)\rightarrow\W^\infty(\widehat\mbG^\Psi)$. Its counterpart acting between commutants will be denoted by $\hat\pi':\W^\infty(\widehat\mbG_1^\Psi)'\rightarrow\W^\infty(\widehat\mbG^\Psi)'$. 
\end{section}
\begin{section}{Rieffel deformation of group coactions}\label{rco}
Suppose that $\G$ is a locally compact group containing an abelian closed subgroup $\Gamma$ and let $\Psi$ be a skew-symmetric bicharacter on $\Hat{\Gamma}$ (see Section \ref{twgr}).
Let $\mbX=(\C_0(\mbX),\Delta_{\mbX})$ be a $\mbG$-$\C^*$-algebra (see Definition \ref{concoact}).  In paper \cite{Kasp} we defined a $\mbG^\Psi$-$\C^*$-algebra $\mbX^\Psi$ - the Rieffel deformation of $\mbX$. The deformation procedure may be viewed as a two steps procedure: first extend $\Delta_{\mbX}$ to the morphism of the appropriate crossed products and then twist the extension by a unitary obtained from $\Psi$. Let us be more precise.

Let $\beta:\G\rightarrow\Aut(\C_0(\mbX))$ be the continuous action corresponding to the coaction $\Delta_\mbX$ (see Remark \ref{clcoac}).
Let $\alpha:\Gamma\rightarrow\Aut(\C_0(\mbX))$ be the restriction of $\beta$ to the subgroup $\Gamma$. The $\C^*$-algebra $\C_0(\mbX^\Psi)$ is defined as the Landstad algebra of the $\Gamma$-product $(\Gamma\ltimes_\alpha\C_0(\mbX),\lambda,\hat{\rho}^{\Psi})$ where $\hat{\rho}^{\Psi}:\Hat{\Gamma}\rightarrow\Aut(\Gamma\ltimes_\alpha\C_0(\mbX))$ is the $\Psi$-deformed dual action (see Section 3, \cite{Kasp}). Let us note that  $\Delta_\mbX\circ\alpha_\gamma=(\rho_{\gamma,e}\otimes\iota)\circ\Delta_\mbX$, where $\rho:\Gamma^2\rightarrow\Aut(\C_0(\mbG))$ is the action defined by 
$\rho_{\gamma_1,\gamma_2}(f)(g)=f(\gamma_1^{-1}g\gamma_2)$ for any $\gamma_1,\gamma_2\in\Gamma$ and $g\in \G$. The universal property of the crossed product  $\Gamma\ltimes_\alpha\C_0(\mbX)$ enables us to define the extension  $\Delta_\mbX^\Gamma\in\Mor(\Gamma\ltimes_\alpha\C_0(\mbX),\Gamma^2\ltimes_\rho\C_0(\mbG)\otimes\Gamma\ltimes_\alpha\C_0(\mbX))$ of $\Delta_\mbX$ to the level of crossed product. 

In order to describe the aforementioned twisting step we define  $\Upsilon\in\M(\Gamma^2\ltimes_{\rho}\C_0(\mbG)\otimes\Gamma\ltimes_\alpha\C_0(\mbX))$ as follows. Let $\Phi\in\Mor(\C^*(\Gamma^2),\Gamma^2\ltimes_{\rho}\C_0(\mbG)\otimes\Gamma\ltimes_\alpha\C_0(\mbX))$ be the morphism that corresponds to the representation $\Gamma\ni(\gamma_1,\gamma_2)\mapsto\lambda_{e,\gamma_1}\otimes\lambda_{\gamma_2}\in\M(\Gamma^2\ltimes_{\rho}\C_0(\mbG)\otimes\Gamma\ltimes_\alpha\C_0(\mbX))$. Applying it to $\bar\Psi$ we get $\Upsilon=\Phi(\bar\Psi)$. We may define $\Delta_{\mbX^\Psi}$ by the formula  $\Delta_{\mbX^\Psi}(b)=\Upsilon\Delta_{\mbX}^\Gamma(b)\Upsilon^*$ for any $b\in\Gamma\ltimes_\alpha\C_0(\mbX)$. By Theorems 4.3 and 4.4 of \cite{Kasp2}  $\Delta_{\mbX^\Psi}$ restricts to a morphism $\Delta_{\mbX^\Psi}:\C_0(\mbX^\Psi)\rightarrow\C_0(\mbG^\Psi)\otimes\C_0(\mbX^\Psi)$ which is a continuous coaction of $\mbG^\Psi$ on $\C_0(\mbX^\Psi)$. This defines $\mbG^\Psi$-$\C^*$-algebra $\mbX^\Psi$.

The reader may have noticed some differences between the above description of the deformation procedure of $\mbD$ and the one given in \cite{Kasp2}. They are due to the adopted definition of $\mbG^\Psi$ as the dual of $\widehat\mbG^\Psi$ and the fact  $\mbD$ is a left $\mbG$-$\C^*$-algebra whereas in \cite{Kasp2} we consider the right case.
\end{section}
\begin{section}{Induction of the regular corepresentation}\label{indregsec}
In this section we shall apply the induction procedure to the regular corepresentation $W_1^\Psi\in\M(\C_0(\mbG^\Psi_1)\otimes\C_0(\widehat\mbG^\Psi_1))$ of $\mbG_1^\Psi$ on $\C_0(\widehat\mbG^\Psi_1)$, where $\C_0(\widehat\mbG^\Psi_1)$ is treated as a $\C^*$-Hilbert module over itself. For the induction procedure in the framework of LCQGs we refer  to \cite{Vaes} (we shall also adopt the notation of this paper). As a result, we obtain the induced $\C_0(\widehat\mbG^\Psi_1)$-Hilbert module $\Ind(\C_0(\widehat\mbG_1^\Psi))$ together with the induced corepresentation $\Ind(W_1^\Psi)\in\mathcal{L}(\C_0(\mbG^\Psi)\otimes\Ind(\C_0(\widehat\mbG^\Psi_1)))$. We shall also apply the induction procedure to the right coaction $\beta^\Psi:\C_0(\widehat\mbG^\Psi_1)\rightarrow\M(\C_0(\widehat\mbG^\Psi_1)\otimes\C_0(\widehat\mbG^\Psi))$ of $\widehat\mbG^\Psi$ on $\C_0(\widehat\mbG^\Psi_1)$, defined by
\begin{equation}\label{bpsi}\beta^\Psi(a)=(\iota\otimes\hat{\pi})\Delta_{\widehat\mbG^\Psi_1}(a),\end{equation} where $\hat\pi\in\Mor(\C_0(\widehat\mbG^\Psi_1),\C_0(\widehat\mbG^\Psi))$ is the standard embedding. The result is the coaction  \[\Ind(\beta^\Psi):\Ind(\C_0(\widehat\mbG^\Psi_1))\rightarrow\M(\Ind(\C_0(\widehat\mbG^\Psi_1))\otimes\C_0(\widehat\mbG^\Psi)).\]
Finally, the induced objects $\Ind(\C_0(\widehat\mbG_1^\Psi)),\Ind(W_1^\Psi)$ and $\Ind(\beta^\Psi)$ will be compared with their untwisted counterparts $\Ind(\C_0(\widehat\mbG_1)),\Ind(W_1)$, and $\Ind(\beta)$. 

Let us first recall that the von Neumann algebra  of the twisted quantum group $\widehat{\mbG}^\Psi$ remains unchanged $\W^\infty(\widehat\mbG^\Psi)=\W^\infty(\widehat\mbG)$. This implies that the imprimitivity bimodule $\mathcal{I}^\Psi$, which is defined by
\[\mathcal{I}^\Psi=\{v\in\B(\W^2(\mbG_1),\W^2(\mbG))|\,\,vm=\hat\pi'(m)v,\mbox{ for any }m\in \W^\infty(\widehat\mbG^\Psi_1)'\}\] stays undeformed: $\mathcal{I}^\Psi=\mathcal{I}$. Nevertheless, the coaction $\alpha_{\mathcal{I}^\Psi}:\mathcal{I}^\Psi\rightarrow\mathcal{I}^\Psi\otimes\W^\infty(\widehat\mbG^\Psi)$:\,\, $\alpha_{\mathcal{I}^\Psi}(v)=\Hat{V}^\Psi(v\otimes 1)(\iota\otimes\hat{\pi})\Hat{V}_1^{\Psi*}$ gets twisted.  The relation with its untwisted counterpart is established by the following formula:
\begin{equation}\label{coactI}\alpha_{\mathcal{I}^\Psi}(v)=\Psi^*\alpha_{\mathcal{I}}(v)\Psi.\end{equation} Let us analyze the strict $*$-homomorphism $\pi_{\widehat\mbG^\Psi_1}:\W^\infty(\widehat\mbG^\Psi_1)\rightarrow\mathcal{L}(\W^2(\mbG))\otimes\C_0(\widehat\mbG^\Psi_1))$  (in Lemma 4.5, \cite{Vaes} it was denoted by $\pi_l$), given by:
\[\pi_{\widehat\mbG^\Psi_1}(m)=(\hat{\pi}\otimes\iota)(W_1^\Psi(m\otimes 1)W_1^{\Psi*})\] for any $m\in\W^\infty(\widehat\mbG^\Psi_1)$. It may be noted that the map $\pi_{\widehat\mbG^\Psi_1}$ coincides with $(\hat\pi\otimes\id)\Delta_{\widehat\mbG^\Psi_{1,\cop}}$ when appropriately interpreted.  Its relation with  $\pi_{\widehat\mbG_1}:\W^\infty(\widehat\mbG_1)\rightarrow\mathcal{L}(\W^2(\mbG))\otimes\C_0(\widehat\mbG^\Psi_1))$ is expressed by the twisting formula:
\[\pi_{\widehat\mbG^\Psi_1}(m)=\Psi\pi_{\widehat\mbG_1}(m)\Psi^*\] for any $m\in\W^\infty(\widehat\mbG^\Psi_1)=\W^\infty(\widehat\mbG_1)$. 
Using $\pi_{\widehat\mbG^\Psi_1}$ we may introduce the $\C_0(\widehat\mbG^\Psi_1)$ - module \[\mathcal{F}^\Psi=\mathcal{I}^\Psi\tens_{\pi_{\widehat\mbG^\Psi_1}}\W^2(\mbG)\otimes \C_0(\widehat\mbG^\Psi_1).\] It is equipped with the strict $*$-homomorphism $\pi_l^\Psi:\W^\infty(\widehat\mbG^\Psi)\rightarrow\mathcal{L}(\mathcal{F}^\Psi)$ and the strict $*$ - anti-homomorphism $\pi_r^\Psi:\W^\infty(\widehat\mbG^\Psi)\rightarrow\mathcal{L}(\mathcal{F}^\Psi)$, which are defined as follows:
\begin{align*}\pi_l^\Psi(m)(i\tens_{\pi_{\widehat\mbG^\Psi_1}}h\otimes a_1)&=(mi)\tens_{\pi_{\widehat\mbG^\Psi_1}}h\otimes a_1\\\pi_r^\Psi(m)(i\tens_{\pi_{\widehat\mbG^\Psi_1}}h\otimes a_1)&=i\tens_{\pi_{\widehat\mbG^\Psi_1}}\hat{J}m^*\hat{J}h\otimes a_1
\end{align*} for any $m\in\W^\infty(\widehat\mbG^\Psi)$, $i\in\mathcal{I}^\Psi$, $h\in \W^2(\mbG)$ and $a_1\in\C_0(\widehat\mbG^\Psi_1)$. In order to compare $\mathcal{F}^\Psi$ and $\mathcal{F}$ we prove:
\begin{stwr}\label{prop5.1} There exists a unitary transformation $U\in\mathcal{L}(\mathcal{F},\mathcal{F}^\Psi)$ such that
\[U(i\tens_{\pi_{\widehat\mbG_1}}h\otimes a_1)=i\tens_{\pi_{\widehat\mbG^\Psi_1}}\Psi(h\otimes a_1)\] for any $i\in\mathcal{I}$, $h\in \W^2(\mbG)$ and $a_1\in\C_0(\widehat\mbG^\Psi_1)$.  $U$ intertwines $\pi_l$ with $\pi_l^\Psi$ and $\pi_r$ with $\pi_r^\Psi$.
\end{stwr}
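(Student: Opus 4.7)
The plan is to define $U$ on elementary tensors by the stated formula, verify that it descends to a well-defined map on the balanced tensor product, check unitarity and surjectivity, and finally establish the two intertwining identities.

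Well-definedness is the step where the twisting formula $\pi_{\widehat\mbG^\Psi_1}(m)=\Psi\pi_{\widehat\mbG_1}(m)\Psi^*$ does the real work. The balancing relation on $\mathcal{F}$ reads $im\tens_{\pi_{\widehat\mbG_1}}h\otimes a_1 = i\tens_{\pi_{\widehat\mbG_1}}\pi_{\widehat\mbG_1}(m)(h\otimes a_1)$ for $m\in\W^\infty(\widehat\mbG_1)$. Applying the proposed $U$ to the left side gives $im\tens_{\pi_{\widehat\mbG^\Psi_1}}\Psi(h\otimes a_1)=i\tens_{\pi_{\widehat\mbG^\Psi_1}}\pi_{\widehat\mbG^\Psi_1}(m)\Psi(h\otimes a_1)$, and the twisting formula together with unitarity of $\Psi$ collapses this to $i\tens_{\pi_{\widehat\mbG^\Psi_1}}\Psi\pi_{\widehat\mbG_1}(m)(h\otimes a_1)$, which is precisely $U$ applied to the right side. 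Exactly the same identity shows that $U$ preserves the $\C_0(\widehat\mbG^\Psi_1)$-valued inner product:
\[\langle\Psi(h_1\otimes a_1),\pi_{\widehat\mbG^\Psi_1}(\langle i_1,i_2\rangle)\Psi(h_2\otimes b_1)\rangle = \langle h_1\otimes a_1,\pi_{\widehat\mbG_1}(\langle i_1,i_2\rangle)(h_2\otimes b_1)\rangle.\]
Surjectivity is immediate, since any elementary tensor $i\tens_{\pi_{\widehat\mbG^\Psi_1}}\eta$ equals $U(i\tens_{\pi_{\widehat\mbG_1}}\Psi^*\eta)$.

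For the intertwining properties, the statement for $\pi_l$ is trivial: both $\pi_l(m)$ and $\pi_l^\Psi(m)$ act by left multiplication on the $\mathcal{I}=\mathcal{I}^\Psi$ factor, and under the identification $\W^\infty(\widehat\mbG^\Psi)=\W^\infty(\widehat\mbG)$ these are literally the same operation. For $\pi_r$ versus $\pi_r^\Psi$, both act on the $\W^2(\mbG)$ slot by $\hat{J}m^*\hat{J}$, so one needs only check that $\hat{J}m^*\hat{J}\otimes 1$ commutes with $\Psi$ on $\W^2(\mbG)\otimes\C_0(\widehat\mbG^\Psi_1)$. This follows from the standard modular identity $\hat{J}\W^\infty(\widehat\mbG)\hat{J}=\W^\infty(\widehat\mbG)'$: the first leg of $\Psi$ sits inside $\W^\infty(\widehat\mbG)$ via $\hat\pi$, hence commutes with every operator of the form $x\otimes 1$ with $x\in\W^\infty(\widehat\mbG)'$.

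The main obstacle is essentially bookkeeping rather than a genuine technical hurdle: one must carefully distinguish between $\C_0(\widehat\mbG_1)$ as the coefficient algebra of $\mathcal{F}$ (acting through $\pi_{\widehat\mbG_1}$) and as that of $\mathcal{F}^\Psi$ (acting through $\pi_{\widehat\mbG^\Psi_1}$), and apply the twisting identity in the correct tensor slot. Once that is sorted out, every verification reduces either to transporting $\Psi$ across $\pi_{\widehat\mbG_1}(m)$ via the twisting formula, or to commuting $\Psi$ past an element of $\W^\infty(\widehat\mbG)'$.
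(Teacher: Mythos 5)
Your proposal is correct and follows the same route as the paper: the paper's proof consists exactly of the well-definedness computation via the twisting formula $\pi_{\widehat\mbG^\Psi_1}(m)=\Psi\pi_{\widehat\mbG_1}(m)\Psi^*$, with unitarity and the intertwining relations left as "a straightforward computation." You have simply written out those omitted verifications, correctly identifying the two mechanisms involved (transporting $\Psi$ across $\pi_{\widehat\mbG_1}(m)$, and commuting the first leg of $\Psi\in\W^\infty(\widehat\mbG)\otimes\,\cdot$ past $\hat{J}m^*\hat{J}\in\W^\infty(\widehat\mbG)'$).
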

\begin{proof} For the existence of $U$ it is enough to note that:
\begin{align*}U(ia\tens_{\pi_{\widehat\mbG_1}}h\otimes a_1)&=ia\tens_{\pi_{\widehat\mbG^\Psi_1}}\Psi(h\otimes a_1)\\
&=i\tens_{\pi_{\widehat\mbG^\Psi_1}}\Psi\pi_{\widehat\mbG_1}(a)(h\otimes a_1)\\
&=U(i\tens_{\pi_{\widehat\mbG_1}}\pi_{\widehat\mbG_1}(a)(h\otimes a_1)).
\end{align*}
The fact that $U$ is unitary and that it possesses the required intertwining properties can be verified by a straightforward computation.
\end{proof}
Let us now introduce the coaction $\alpha_{\mathcal{F}^\Psi}:\mathcal{F}^\Psi\rightarrow\M(\mathcal{F}^\Psi\otimes \C_0(\widehat\mbG^\Psi))$. It is defined as a product of the coaction $\alpha_{\mathcal{I}^\Psi}$ given by \eqref{coactI} and  the coaction $\alpha_{\W^2(\mbG)\otimes\C_0(\widehat\mbG^\Psi_1)}:\W^2(\mbG)\otimes\C_0(\widehat\mbG^\Psi_1)\rightarrow \M(\W^2(\mbG)\otimes\C_0(\widehat\mbG^\Psi_1)\otimes\C_0(\widehat\mbG^\Psi))$ given by: \[\alpha_{\W^2(\mbG)\otimes\C_0(\widehat\mbG^\Psi_1)}(\xi)=\hat{V}^\Psi_{13}(\xi\otimes 1),\] for any $\xi\in\W^2(\mbG)\otimes\C_0(\widehat\mbG^\Psi_1)$. The formula defining $\alpha_{\mathcal{F}^\Psi}$ goes as follows:
\[\alpha_{\mathcal{F}^\Psi}(i\tens_{\pi_{\widehat\mbG^\Psi_1}}\xi)=\alpha_{\mathcal{I}^\Psi}(i)\tens_{\pi_{\widehat\mbG^\Psi_1}\otimes\iota}\alpha_{\W^2(\mbG)\otimes\C_0(\widehat\mbG^\Psi_1)}(\xi).\]
The coaction  $\alpha_{\mathcal{F}^\Psi}$ corresponds to the corepresentation  $Y^\Psi\in\mathcal{L}(\mathcal{F}^\Psi\otimes\C_0(\widehat\mbG^\Psi))$ of $\widehat\mbG^\Psi$ on $\mathcal{F}^\Psi$:
\[Y^\Psi(f\otimes a)=\alpha_{\mathcal{F}^\Psi}(f)(1\otimes a).\] In order to compare $Y^\Psi$ with its undeformed counterpart $Y\in\mathcal{L}(\mathcal{F}\otimes\C_0(\widehat\mbG))$ we compute:
\begin{align*}\allowdisplaybreaks
(U^*\otimes 1)Y^\Psi(U\otimes 1)(i\tens_{\pi_{\widehat\mbG_1}}\xi\otimes a)&\\&\hspace{-1cm}=(U^*\otimes 1)Y^\Psi(i\tens_{\pi_{\widehat\mbG^\Psi_1}}\Psi\xi\otimes a)\\&\hspace{-1cm}=(U^*\otimes 1)(\alpha_{\mathcal{F}^\Psi}(i\tens_{\pi_{\widehat\mbG^\Psi_1}}\Psi\xi))(1\otimes a)\\&\hspace{-1cm}=(U^*\otimes 1)(\Psi^*\alpha_{\mathcal{I}}(i)\Psi\tens_{\pi_{\widehat\mbG^\Psi_1}\otimes\iota}\Psi^*_{13}\Hat{V}_{13}\Psi'^*_{13}\Psi_{12}(\xi\otimes 1))(1\otimes a)\\&\hspace{-1cm}=(\Psi^*\alpha_{\mathcal{I}}(i)\Psi\tens_{\pi_{\widehat\mbG_1}\otimes\iota}\Psi_{12}^*\Psi^*_{13}\Hat{V}_{13}\Psi'^*_{13}\Psi_{12}(\xi\otimes 1))(1\otimes a)\\&\hspace{-1cm}=(\Psi^*\alpha_{\mathcal{I}}(i)\Psi\tens_{\pi_{\widehat\mbG_1}\otimes\iota}\Psi_{12}^*\Psi^*_{13}\Psi_{12}\Psi_{23}^*\Hat{V}_{13}\Psi'^*_{13}(\xi\otimes 1))(1\otimes a)\\&\hspace{-1cm}=(\Psi^*\alpha_{\mathcal{I}}(i)\tens_{\pi_{\widehat\mbG_1}\otimes\iota}\Psi_{13}\Psi_{23}\Psi_{12}^*\Psi^*_{13}\Psi_{12}\Psi_{23}^*\Hat{V}_{13}\Psi'^*_{13}(\xi\otimes 1))(1\otimes a)
\\&\hspace{-1cm}=(\Psi^*\alpha_{\mathcal{I}}(i)\tens_{\pi_{\widehat\mbG_1}\otimes\iota}\Hat{V}_{13}\Psi'^*_{13}(\xi\otimes 1))(1\otimes a)
\\&\hspace{-1cm}=(\pi_l\otimes\id)(\Psi^*)Y(\pi_r\otimes\id)(\Psi'^*)(i\tens_{\pi_{\widehat\mbG_1}}\xi\otimes a)
\end{align*}
In the fifth equality we used the easy to verify formula $\Hat{V}_{13}\Psi_{12}\Hat{V}^*_{13}=\Psi_{12}\Psi_{23}^*$ and in the sixth equality we used: 
$(\pi_{\widehat\mbG_1}\otimes\iota)\Psi=\Psi_{13}\Psi_{23}$. For the formula for $\Psi'$ we refer to \eqref{psipr}.
This computation shows that the relation between the corepresentations $Y\in\mathcal{L}(\mathcal{F}\otimes\C_0(\widehat\mbG))$ and $Y^\Psi\in\mathcal{L}(\mathcal{F}^\Psi\otimes\C_0(\widehat\mbG^\Psi))$ is given by \[(U^*\otimes\id)Y^\Psi(U\otimes\id)=(\pi_{l}\otimes\id)(\Psi^*)Y(\pi_r\otimes\id)(\Psi'^*),\] 
where we treat the anti-homomorphism $\pi_r:\W^\infty(\widehat\mbG)\rightarrow \mathcal{L}(\mathcal{F})$ as the homomorphism of the commutant $\W^\infty(\widehat\mbG)'$ (note that $\Psi'\in\W^\infty(\widehat\mbG)'\otimes\W^\infty(\widehat\mbG)$). 

We are now ready to compare the induced module $\Ind(\C_0(\widehat\mbG^\Psi_1))$ and the induced corepresentation $\Ind(W^\Psi_1)$ with their untwisted counterparts $\Ind(\C_0(\widehat\mbG_1))$ and $\Ind(W_1)$. Let us first recall the construction of the untwisted objects. By the results of \cite{Vaes} there exists a (unique) strict $*$-homomorphism $\Theta:\B(\W^2(\mbG))\rightarrow\mathcal{L}(\mathcal{F})$ such that 
\begin{align*}(\Theta\otimes\iota)\hat{V}&=Y,\\
\Theta(\hat{J}x^*\hat{J})&=\pi_r(x),
\end{align*} for any $x\in\W^\infty(\widehat\mbG)$. 
The induced $\C^*$-module is defined as
\[\Ind(\C_0(\widehat\mbG_1))=\{v:\W^2(\mbG)\rightarrow\mathcal{F}:vx=\Theta(x)v\mbox{ for all }x\in\B(\W^2(\mbG))\},\] where by assumption $v:\W^2(\mbG)\rightarrow\mathcal{F}$ is a continuous map.
It follows from the above definition that we may define the unitary transformation $\Phi:\mathcal{F}\rightarrow \W^2(\mbG)\otimes\Ind(\C_0(\widehat\mbG_1))$ of $\C_0(\widehat\mbG_1)$-modules, such that
\begin{equation}\label{phid}\Phi^*(h\otimes v)=v(h)\end{equation} for any $h\otimes v\in \W^2(\mbG)\otimes\Ind(\C_0(\widehat\mbG_1))$.
The induced corepresentation $\Ind(W_1)\in\mathcal{L}(\C_0(\mbG)\otimes\Ind(\C_0(\widehat\mbG_1)))$ of $\mbG$  is defined by the equation 
\[(\iota\otimes\pi_l)W=W_{12}\Ind(W_1)_{13},\] where the leg numbering notation on the right hand side of this equation is to be understood in the sense of the identification $\mathcal{F}\cong \W^2(\mbG)\otimes\Ind(\C_0(\widehat\mbG_1))$. 
The isomorphism $\Phi$ defined above intertwines the the strict $*$-homomorphism  $\pi_l$ with the $\Ad_{\Ind(W_1)}$ (see Proposition 3.7 and Section 4 of \cite{Vaes}):
\begin{equation}\label{phident}\Phi\pi_l(x)\Phi^*=\Ind(W_1)(x\otimes 1)\Ind(W_1)^*.\end{equation}
Let us write the corepresentation equation $(\Delta_{\mbG}\otimes\iota)\Ind(W_1)=\Ind(W_1)_{13}\Ind(W_1)_{23}$ in terms of $W$ and $\Ind(W_1)$:
\begin{equation}\label{indform}
\Ind(W_1)_{23}W_{12}\Ind(W_1)_{23}^*=W_{12}\Ind(W_1)_{13}
\end{equation}
Applying the character $\chi_g:\C_0(\G)\rightarrow\mathbb{C}$, $g\in \G$ to the first leg of the above equation we get the following formula
\begin{equation}\label{indform1}\Ind(W_1)(L_g\otimes 1)\Ind(W_1)^*=L_g\otimes \Ind(W_1)(g),\end{equation} where we identified the induced corepresentation $\Ind(W_1)\in\mathcal{L}(\C_0(\mbG)\otimes\Ind(\C_0(\widehat\mbG_1)))$ with the corresponding representation of the group $\G$ on $\mathcal{K}(\Ind(\C_0(\widehat\mbG_1)))$:
\[\Ind(W_1)(g)=(\chi_g\otimes\iota)(\Ind(W_1)).\]

The twisted objects are defined similarly. Starting with the strict $*$-homomorphism $\Theta^\Psi:\B(\W^2(\mbG))\rightarrow\mathcal{L}(\mathcal{F}^\Psi)$ we may define the induced $\C^*$-module $\Ind(\C_0(\widehat\mbG^\Psi_1))$ and the isomorphism $\Phi^\Psi:\mathcal{F}^\Psi\rightarrow \W^2(\mbG)\otimes\Ind(\C_0(\widehat\mbG^\Psi_1))$. Defining $\Ind(W^\Psi_1)$ by the equation
\[(\iota\otimes\pi^\Psi_l)W^\Psi=W^\Psi_{12}\Ind(W^\Psi_1)_{13}\] we may relate $\pi^\Psi_l$, $\Phi^\Psi$ and $\Ind(W^\Psi_1)$ by the formula:
\[\Phi^\Psi\pi^\Psi_l(x)\Phi^{\Psi*}=\Ad_{\Ind(W^\Psi_1)}(x)=\Ind(W^\Psi_1)(x\otimes 1)\Ind(W^\Psi_1)^*.\]

In order to  compare $\Theta$ and $\Theta^\Psi$ let us define $\tilde\Psi\in\mathcal{L}(\Ind(\C_0(\widehat\mbG_1))\otimes\W^2(\mbG))$ by:
\begin{equation}\label{indpsi}\Ind(W_1)_{12}\Psi_{13}\Ind(W_1)_{12}^*=\Psi_{13}\tilde\Psi_{23}.\end{equation}
The existence of such $\tilde\Psi$ follows from equation \eqref{indform1}.
Let us also introduce \begin{equation}\label{psi'}\tilde{\Psi}'=\Phi^*\Sigma\tilde{\Psi}\Sigma\Phi\in\mathcal{L}(\mathcal{F})\end{equation} where $\Sigma$ is the flip operation on the tensor product.
\begin{twr}\label{thmthepsi}
Let $\Theta:\B(\W^2(\mbG))\rightarrow\mathcal{L}(\mathcal{F})$ and $\Theta^\Psi:\B(\W^2(\mbG))\rightarrow\mathcal{L}(\mathcal{F}^\Psi)$ be the strict $*$-homomorphisms introduced above and let $U\in\mathcal{L}(\mathcal{F},\mathcal{F}^\Psi)$ be the unitary transformation introduced in Proposition \ref{prop5.1}. Let $\Psi'\in\mathcal{L}(\mathcal{F})$ be the element introduced in Eq. \eqref{psi'}. Then 
\begin{equation}\label{thpsi}\Theta^\Psi(x)=U\left(\Ad_{\tilde{\Psi}'}\circ\Theta\right)(x) U^*\end{equation} for any $x\in\B(\W^2(\mbG))$. In particular, $\Ind(\C_0(\widehat\mbG_1))$ and $\Ind(\C_0(\widehat\mbG^\Psi_1))$ are isomorphic, via the isomorphism  $K:\Ind(\C_0(\widehat\mbG_1))\rightarrow\Ind(\C_0(\widehat\mbG^\Psi_1))$ given by:
\begin{equation}\label{defk}\Ind(\C_0(\widehat\mbG_1))\ni v\mapsto K(v)=U\tilde{\Psi}'v\in\Ind(\C_0(\widehat\mbG^\Psi_1)).\end{equation}
\end{twr}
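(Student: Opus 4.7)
The plan is to exploit the uniqueness of $\Theta^\Psi$: it is characterized, following Vaes' construction, as the unique strict $*$-homomorphism $\B(\W^2(\mbG))\to\mathcal{L}(\mathcal{F}^\Psi)$ satisfying $(\Theta^\Psi\otimes\iota)\hat V^\Psi=Y^\Psi$ and $\Theta^\Psi(\hat Jy^*\hat J)=\pi_r^\Psi(y)$ for $y\in\W^\infty(\widehat\mbG)$. I therefore define $\tilde\Theta^\Psi(x):=U\tilde\Psi'\Theta(x)\tilde\Psi'^*U^*$; since $\Theta$ is a strict $*$-homomorphism and $U$, $\tilde\Psi'$ are unitaries, so is $\tilde\Theta^\Psi$. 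The task reduces to checking that $\tilde\Theta^\Psi$ satisfies the two defining identities; the isomorphism $K$ will then be immediate.

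For the commutant identity I would combine $\Theta(\hat Jy^*\hat J)=\pi_r(y)$ with the intertwining $U\pi_r(y)U^*=\pi_r^\Psi(y)$ from Proposition \ref{prop5.1}. What remains is to check that $\tilde\Psi'$ commutes with $\pi_r(y)$ for every $y\in\W^\infty(\widehat\mbG)$. Transported through $\Phi$, the operator $\pi_r(y)$ acts as $\hat Jy^*\hat J\otimes 1\in\W^\infty(\widehat\mbG)'\otimes 1\subset\mathcal{L}(\W^2(\mbG)\otimes\Ind(\C_0(\widehat\mbG_1)))$, whereas $\Phi\tilde\Psi'\Phi^*=\Sigma\tilde\Psi\Sigma$ has its $\W^2(\mbG)$-leg in $\W^\infty(\widehat\mbG)$ (by the fact that the third leg of the defining relation \eqref{indpsi} lies in $\W^\infty(\widehat\mbG)$). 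The two operators commute.

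The corepresentation identity is the substantive part. Expanding $\hat V^\Psi=\Psi^*\hat V\Psi'^*$ and using $(\Theta\otimes\iota)\hat V=Y$ yields
\[(\tilde\Theta^\Psi\otimes\iota)\hat V^\Psi=(U\otimes 1)(\tilde\Psi'\otimes 1)(\Theta\otimes\iota)(\Psi^*)\,Y\,(\Theta\otimes\iota)(\Psi'^*)(\tilde\Psi'^*\otimes 1)(U^*\otimes 1),\]
and by the key computation already performed in the text the target equals $Y^\Psi=(U\otimes 1)(\pi_l\otimes\iota)(\Psi^*)\,Y\,(\pi_r\otimes\iota)(\Psi'^*)(U^*\otimes 1)$. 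Since $\Theta$ coincides with the natural extension of $\pi_r$ to $\W^\infty(\widehat\mbG)'$ (a direct consequence of $\Theta(\hat Jy^*\hat J)=\pi_r(y)$), one has $(\Theta\otimes\iota)(\Psi'^*)=(\pi_r\otimes\iota)(\Psi'^*)$, and this factor commutes with $\tilde\Psi'^*\otimes 1$ by the previous paragraph. The remaining assertion is $(\tilde\Psi'\otimes 1)(\Theta\otimes\iota)(\Psi^*)Y(\tilde\Psi'^*\otimes 1)=(\pi_l\otimes\iota)(\Psi^*)Y$. Transported through $\Phi\otimes 1$, this reduces to an identity in $\mathcal{L}(\W^2(\mbG)\otimes\Ind(\C_0(\widehat\mbG_1))\otimes\W^2(\mbG))$ which follows from \eqref{indpsi} combined with the defining equation $(\iota\otimes\pi_l)W=W_{12}\Ind(W_1)_{13}$ for $\Ind(W_1)$. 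This step is the technical heart of the proof.

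Given \eqref{thpsi}, the isomorphism $K$ is immediate: for $v\in\Ind(\C_0(\widehat\mbG_1))$ satisfying $vx=\Theta(x)v$ for all $x\in\B(\W^2(\mbG))$, one has $K(v)x=U\tilde\Psi'\Theta(x)v=\Theta^\Psi(x)K(v)$, so $K(v)=U\tilde\Psi'v\in\Ind(\C_0(\widehat\mbG^\Psi_1))$; $K$ is a unitary of right-$\C_0(\widehat\mbG_1)$-modules since $U$ and $\tilde\Psi'$ are unitaries in $\mathcal{L}(\mathcal{F},\mathcal{F}^\Psi)$ and $\mathcal{L}(\mathcal{F})$ respectively. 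The main obstacle is thus the corepresentation identity, for which \eqref{indpsi} was precisely tailored.
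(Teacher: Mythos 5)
Your proposal is correct and follows essentially the same route as the paper: checking the candidate $U\Ad_{\tilde\Psi'}\circ\Theta(\cdot)U^*$ against the two defining properties of $\Theta^\Psi$ is exactly the paper's verification of \eqref{thpsi} on the two generating subalgebras $\W^\infty(\widehat\mbG^\Psi)'$ (via $\pi_r$ and Proposition \ref{prop5.1}) and $\W^\infty(\mbG^\Psi)'$ (via the computation \eqref{thpsi2}). The ``technical heart'' you isolate reduces, after transporting through $\Phi$, to precisely the commutation relation $\hat V_{13}\tilde\Psi'^*_{12}\hat V_{13}^*=\tilde\Psi'^*_{12}\tilde\Psi^*_{23}$ together with \eqref{pilpsi} that the paper uses, so the two arguments coincide in substance.
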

\begin{proof} Since the algebras $\W^\infty(\widehat{\mbG}^\Psi)'$ and $\W^\infty(\mbG^\Psi)'$ generate $\B(\W^2(\mbG))$, it is enough to check Eq. \eqref{thpsi} on them separately.
It is easy to see that for any $x\in\W^\infty(\widehat{\mbG}^\Psi)'$
\begin{equation}\label{thpsi1}(\Ad_{\tilde{\Psi}'}\circ\Theta)(\hat{J}x^*\hat{J})=U^*\pi_r(x)U,\end{equation} which proves  equality \eqref{thpsi} on $\W^\infty(\widehat{\mbG}^\Psi)'$.
To prove it on $\W^\infty(\mbG^\Psi)'$ it is enough to see that $(\Ad_{\tilde{\Psi}'}\circ\Theta\otimes\id)\hat{V}^\Psi=(U^*\otimes \id)Y^\Psi(U\otimes \id)$. In the following computation we shall identify  $\mathcal{F}$ with $\W^2(\mbG)\otimes\Ind(\C_0(\widehat\mbG_1))$ by means of the isomorphism $\Phi$ (see \eqref{phid}). Using \eqref{phid} and \eqref{indpsi} it is easy to show that under the aforementioned identification we have:
\begin{equation}\label{pilpsi}(\pi_l\otimes\id)\Psi=\Psi_{13}\tilde{\Psi}_{23}.\end{equation}
We compute
\begin{equation}\label{thpsi2}
\begin{array}{rl}
(\Ad_{\tilde{\Psi}'}\circ\Theta\otimes\id)(\hat{V}^\Psi_{13})
&=\tilde{\Psi}'_{12}\Psi^*_{13}\hat{V}_{13}\Psi_{13}'^*\tilde{\Psi}'^*_{12}\\&
=\tilde{\Psi}'_{12}\Psi_{13}^*\hat{V}_{13}\tilde{\Psi}'^*_{12}\Psi'^*_{13}\\&
=\tilde{\Psi}'_{12}\tilde{\Psi}'^*_{12}\tilde{\Psi}_{23}^*\Psi_{13}^*\hat{V}_{13}\Psi_{13}'^*\\&=(\pi_l\otimes\id)(\Psi^*)\hat{V}_{13}(\pi_r\otimes\id)(\Psi'^*)=Y^\Psi,
\end{array}
\end{equation}
where in the third equality we used an easy to check formula:
\[\hat{V}_{13}\tilde{\Psi}'^*_{12}\hat{V}_{13}^*=\tilde{\Psi}'^*_{12}\tilde{\Psi}_{23}^*\]
and in the fourth equality we used \eqref{pilpsi}.
\end{proof}
Let us now move on to the comparison of the induced corepresentations $\Ind(W_1^\Psi)$ and $\Ind(W_1)$. In order to do that we introduce two elements $\check{\Psi},\check{\Psi}'\in\mathcal{L}(\W^2(\mbG)\otimes\Ind(\C_0(\widehat\mbG_1)))$ such that 
\begin{align}\label{check1}(\iota\otimes\pi_l)(\Psi )&=\Psi_{12}\check{\Psi}_{13},\\
\label{check2}(\iota\otimes\pi_l)(\Psi )&=\Psi_{12}'\check{\Psi}'_{13}.
\end{align}
Their existence follows from the bicharacter equation for $\Psi$ and Eq. \eqref{indform1}. 
Let us note that $\check\Psi$ and the element $\tilde\Psi$ satisfying \eqref{pilpsi} are related by the formula
\begin{equation}\label{relatch}
\check\Psi=\Sigma\tilde\Psi^*\Sigma.
\end{equation}
We compute:
\begin{align*}(\iota\otimes\pi^\Psi_l)(W^\Psi)&=\check{\Psi}_{23}(\iota\otimes\pi_l)(\Psi W\Psi')\check{\Psi}_{23}^*\\
&=
\check{\Psi}_{23}\Psi_{12}\check{\Psi}_{13}W_{12}\Ind(W_1)_{13}\Psi_{12}'\check{\Psi}'_{13}\check{\Psi}^*_{23}\\&=
\Psi_{12}W_{12}\check{\Psi}_{13}\Psi_{12}'\Ind(W_1)_{13}\check{\Psi}'_{13}\\&=
\Psi_{12}W_{12}\Psi'_{12}\check{\Psi}_{13}\Ind(W_1)_{13}\check{\Psi}'_{13}\\&=
W^\Psi_{12}\check{\Psi}_{13}\Ind(W_1)_{13}\check{\Psi}'_{13},
\end{align*}
where in the third equality we used 
\begin{align*}
W_{12}\check{\Psi}_{13}W_{12}^*&=\check{\Psi}_{13}\check\Psi_{23},\\
\Ind(W_1)_{13}^*\Psi'_{12}\Ind(W_1)_{13}&=\Psi'_{12}\check\Psi_{23}^*.
\end{align*}
The first of these equalities follows from the bicharacter equation for $\Psi$ whereas the second one follows from the equality below:
\[\Ind(W_1)^*(R_g\otimes 1)\Ind(W_1)=R_g\otimes \Ind(W_1)(g).\]
This shows that with the identification $\Ind(\C_0(\widehat\mbG_1))\cong\Ind(\C_0(\widehat\mbG^\Psi_1))$ of Theorem \ref{thmthepsi} we have \[\Ind(W_1^\Psi)=\check{\Psi}\Ind(W_1)\check{\Psi}',\] where the product on the right hand side is taken in the $\C^*$-algebra $\mathcal{L}(\W^2(\mbG)\otimes\Ind(\C_0(\widehat\mbG_1))$.   

Finally, we shall  compare the  induced right coactions \begin{align*}\Ind(\beta)^\Psi:&\Ind(\C_0(\widehat\mbG^\Psi_1))\rightarrow\M(\Ind(\C_0(\widehat\mbG^\Psi_1))\otimes\C_0(\widehat\mbG^\Psi)),\\\Ind(\beta):&\Ind(\C_0(\widehat\mbG_1))\rightarrow\M(\Ind(\C_0(\widehat\mbG_1))\otimes\C_0(\widehat\mbG)).\end{align*}
Equation (6.2) of \cite{Vaes} defines the induced coaction $\Ind(\beta)$ as
\[(\iota\otimes\Ind(\beta))\Phi(i\tens_{\pi_{\widehat\mbG_1}}x)=W_{13}(\Phi\otimes\iota)\left((i\otimes 1)\tens_{\pi_{\widehat\mbG_1}\otimes\iota}W_{13}^*(\iota\otimes\beta)(x)\right).\] Similarly, $\Ind(\beta^\Psi)$ is defined by:
\[(\iota\otimes\Ind(\beta^\Psi))\Phi^\Psi(i\tens_{\pi_{\widehat\mbG^\Psi_1}}x)=W^\Psi_{13}(\Phi^\Psi\otimes\iota)\left((i\otimes 1)\tens_{\pi_{\widehat\mbG^\Psi_1}\otimes\iota}W^{\Psi*}_{13}(\iota\otimes\beta^\Psi)(x)\right).\]
Combining \eqref{phid}, \eqref{defk} and \eqref{relatch} one can easily check that \begin{equation}\label{kphi}(\iota\otimes K)\check\Psi\Phi=\Phi^{\Psi}U.\end{equation}
Using this formula we compute:
\begin{align*}(\iota\otimes\Ind(\beta^\Psi))(\iota\otimes K)\check\Psi\Phi(i\tens_{\pi_{\widehat\mbG_1}}x)&\\&\hspace{-4cm}=
(\iota\otimes\Ind(\beta^\Psi))\Phi^\Psi U(i\tens_{\pi_{\widehat\mbG_1}}x)&\\&\hspace{-4cm}=
W^\Psi_{13}(\Phi^\Psi\otimes\iota)\left((i\otimes 1)\tens_{\pi_{\widehat\mbG^\Psi_1}\otimes\iota}W^{\Psi*}_{13}(\iota\otimes\beta^\Psi)(\Psi x)\right)\\&\hspace{-4cm}=
W^\Psi_{13}(\Phi^\Psi\otimes\iota)\left((i\otimes 1)\tens_{\pi_{\widehat\mbG^\Psi_1}\otimes\iota}W^{\Psi*}_{13}\Psi_{13}\Psi_{12}(\iota\otimes\beta^\Psi)(x)\right)
\\&\hspace{-4cm}=
W^\Psi_{13}(\Phi^\Psi\otimes\iota)\left((i\otimes 1)\tens_{\pi_{\widehat\mbG^\Psi_1}\otimes\iota}\Psi'^*_{13}W^*_{13}\Psi_{12}(\iota\otimes\beta^\Psi)(x)\right)
\\&\hspace{-4cm}=
W^\Psi_{13}(\Phi^\Psi\otimes\iota)\left((i\otimes 1)\tens_{\pi_{\widehat\mbG^\Psi_1}\otimes\iota}\Psi'^*_{13}W^*_{13}\Psi_{12}\Psi_{23}^*(\iota\otimes\beta)(x)\Psi_{23}\right)\\&\hspace{-4cm}=
W^\Psi_{13}(\Phi^\Psi\otimes\iota)\left((i\otimes 1)\tens_{\pi_{\widehat\mbG^\Psi_1}\otimes\iota}\Psi'^*_{13}\Psi_{12}W^*_{13}(\iota\otimes\beta)(x)\Psi_{23}\right)\\
&\hspace{-4cm}=
W^\Psi_{13}(\Phi^\Psi\otimes\iota)(U\otimes\id)\left((i\otimes 1)\tens_{\pi_{\widehat\mbG_1}\otimes\iota}\Psi'^*_{13}W^*_{13}(\iota\otimes\beta)(x)\Psi_{23}\right)\\
&\hspace{-4cm}=
\Psi_{13}W_{13}(\Phi^\Psi\otimes\iota)(U\otimes\id)\left((i\otimes 1)\tens_{\pi_{\widehat\mbG_1}\otimes\iota}W^*_{13}(\iota\otimes\beta)(x)\Psi_{23}\right)\\
&\hspace{-4cm}=
\Psi_{13}W_{13}(\iota\otimes K\otimes\iota)(\check\Psi\otimes 1)(\Phi\otimes\id)\left((i\otimes 1)\tens_{\pi_{\widehat\mbG_1}\otimes\iota}W^*_{13}(\iota\otimes\beta)(x)\Psi_{23}\right)\\
&\hspace{-4cm}=
(\iota\otimes K\otimes\iota)\Psi_{13}\check\Psi_{12}\tilde\Psi_{23}^*W_{13}(\Phi\otimes\id)\left((i\otimes 1)\tens_{\pi_{\widehat\mbG_1}\otimes\iota}W^*_{13}(\iota\otimes\beta)(x)\right)\Psi_{23}
\\
&\hspace{-4cm}=
(\iota\otimes K\otimes\iota)\Psi_{13}\check\Psi_{12}\tilde\Psi_{23}^*(\iota\otimes\Ind(\beta))(\Phi(\iota\tens_{\pi_{\widehat\mbG_1}}x))\Psi_{23}.
\end{align*}
Using the equality
\[(\iota\otimes\Ind(\beta))(\check\Psi)=\Psi_{13}\check\Psi_{12}\] we get 
\begin{align*}(\iota\otimes\Ind(\beta^\Psi))(\iota\otimes K)\check\Psi\Phi\left(i\tens_{\pi_{\widehat\mbG_1}}x\right)&\\&\hspace{-3cm}=(\iota\otimes K\otimes\iota)\tilde\Psi_{23}^*\left((\iota\otimes\Ind(\beta))\check\Psi\Phi\left(\iota\tens_{\pi_{\widehat\mbG_1}}x\right)\right)\Psi_{23}.
\end{align*} 
Hence we see that
\[( K^*\otimes\iota)\Ind(\beta^\Psi)K(v)=\tilde\Psi^*\Ind(\beta)(v)\Psi\] for any $v\in\Ind(\C_0(\widehat\mbG^\Psi_1))$. 
In particular, for any $x\in\mathcal{K}(\Ind(\C_0(\widehat\mbG^\Psi_1)))$ we have 
\begin{equation}\label{twcomp}( K^*\otimes\iota)\Ind(\beta^\Psi)(KxK^*)( K\otimes\iota)=\tilde\Psi^*\Ind(\beta)(x)\tilde\Psi.\end{equation}
In what follows we summarize the above considerations: 
\begin{twr}\label{indreg} Let $\C_0(\widehat\mbG^\Psi)$ be the $\C^*$-algebra of the quantum group $\widehat\mbG^\Psi$ treated as the $\C^*$-Hilbert module over itself. Let $W_1^\Psi\in\M(\C_0(\mbG^\Psi)\otimes\C_0(\widehat\mbG^\Psi))$ be the left regular corepresentation of $\mbG^\Psi$ and $\beta^\Psi$ be the right coaction of $\widehat\mbG^\Psi$ defined in \eqref{bpsi}. Let  $K:\Ind(\C_0(\widehat\mbG^\Psi_1))\rightarrow\Ind(\C_0(\widehat\mbG_1))$ be the isomorphism introduced in Eq. \eqref{defk} (in what follows we shall identify $\Ind(\C_0(\widehat\mbG^\Psi_1))$ with $\Ind(\C_0(\widehat\mbG_1))$ by means of $K$).

The induced coaction $\Ind(W_1^\Psi)\in\mathcal{L}(\W^2(\mbG)\otimes\Ind(\C_0(\widehat\mbG_1)))$ is given by 
\begin{equation}\label{thforw}\Ind(W_1^\Psi)=\check{\Psi}\Ind(W_1)\check{\Psi}'\end{equation} where $\check{\Psi},\check{\Psi}'\in\mathcal{L}(\W^2(\mbG)\otimes\Ind(\C_0(\widehat\mbG_1)))$ are the unitary elements defined by \eqref{check1} and \eqref{check2}.
The twisted induced coaction \[\Ind(\beta)^\Psi:\Ind(\C_0(\widehat\mbG^\Psi_1))\rightarrow\M(\Ind(\C_0(\widehat\mbG_1))\otimes\C_0(\widehat\mbG^\Psi))\] is related with its untwisted counterpart $\Ind(\beta)$ by the following formula
\begin{equation}\label{thforb}\Ind(\beta^\Psi)(v)=\tilde\Psi^*\Ind(\beta)(v)\Psi\end{equation}
for any $v\in\Ind(\C_0(\widehat\mbG_1))$, where $\tilde\Psi\in\mathcal{L}(\Ind(\C_0(\widehat\mbG_1))\otimes\C_0(\widehat\mbG^\Psi))$ is the unitary element defined by \eqref{pilpsi}.
\end{twr}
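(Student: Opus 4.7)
The theorem is a clean restatement of the two identities \eqref{thforw} and \eqref{thforb} that are already extracted in the computational block immediately preceding the statement. So my plan is simply to organize those calculations into a proof with two independent verifications: first \eqref{thforw}, then \eqref{thforb}. Throughout I identify $\Ind(\C_0(\widehat\mbG^\Psi_1))$ with $\Ind(\C_0(\widehat\mbG_1))$ via the isomorphism $K$ coming from Theorem \ref{thmthepsi}.

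For \eqref{thforw}, I would begin with the defining equation $(\iota\otimes\pi_l^\Psi)W^\Psi=W^\Psi_{12}\Ind(W_1^\Psi)_{13}$, substitute $W^\Psi=\Psi W\Psi'$, and transfer $\pi_l^\Psi$ to $\pi_l$ using Theorem \ref{thmthepsi}. Applying $(\iota\otimes\pi_l)$ leg by leg, the defining relations \eqref{check1}, \eqref{check2} for $\check\Psi$ and $\check\Psi'$ and the two commutation identities
\[
W_{12}\check\Psi_{13}W_{12}^* = \check\Psi_{13}\check\Psi_{23}, \qquad \Ind(W_1)_{13}^*\,\Psi'_{12}\,\Ind(W_1)_{13} = \Psi'_{12}\check\Psi_{23}^*,
\]
the first being a consequence of the bicharacter equation for $\Psi$ and the second coming from \eqref{indform1} applied to right translations $R_g$, allow me to push all auxiliary factors out of the first two legs. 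The expression collapses to $W^\Psi_{12}\check\Psi_{13}\Ind(W_1)_{13}\check\Psi'_{13}$; reading off the third leg produces \eqref{thforw}.

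For \eqref{thforb}, I would start from the Vaes formula defining $\Ind(\beta^\Psi)$ applied to $\Phi^\Psi(i\tens_{\pi_{\widehat\mbG_1^\Psi}} x)$ and use \eqref{kphi} to replace $\Phi^\Psi U$ by $(\iota\otimes K)\check\Psi\Phi$, transferring the computation to the untwisted side. The essential inputs are the bicharacter identity for $\Psi$, which yields $(\iota\otimes\beta^\Psi)(\Psi x) = \Psi_{13}\Psi_{12}(\iota\otimes\beta^\Psi)(x)$; the relation $\beta^\Psi = \operatorname{Ad}\Psi_{23}^*\circ\beta$ coming from \eqref{bpsi} together with $\Delta_{\widehat\mbG_1^\Psi} = \operatorname{Ad}\Psi^*\circ\Delta_{\widehat\mbG_1}$; the conjugation $W^{\Psi*}_{13}\Psi_{13} = \Psi'^*_{13}W^{*}_{13}$ from $W^\Psi = \Psi W\Psi'$; and the identity $(\iota\otimes\Ind(\beta))\check\Psi = \Psi_{13}\check\Psi_{12}$ which follows from \eqref{check1} together with $\beta = (\iota\otimes\hat\pi)\Delta_{\widehat\mbG_1}$. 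After collecting terms, the Vaes definition of the untwisted $\Ind(\beta)$ reappears in the middle, and the surviving auxiliary factors are exactly $\tilde\Psi^*$ on the left and $\Psi$ on the right, giving \eqref{thforb}.

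The only real obstacle is the three-leg bookkeeping: keeping track of which leg each of $\Psi,\Psi',\check\Psi,\check\Psi',\tilde\Psi,W,\hat V,\Ind(W_1)$ acts on, and applying the commutation identities in the right order. Each such identity is itself a one-line consequence of either the bicharacter equation for $\Psi$, the pentagonal equation for $W$, or the intertwining property \eqref{indform1} of $\Ind(W_1)$; so once the correct algebraic route is laid out the two formulas follow mechanically, and the isomorphism $K$ of Theorem \ref{thmthepsi} provides the identification under which the statement of the theorem holds.
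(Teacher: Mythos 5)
Your proposal is correct and follows essentially the same route as the paper: the theorem is explicitly a summary of the two computational blocks preceding it, and you reproduce exactly those computations --- for \eqref{thforw} the substitution $W^\Psi=\Psi W\Psi'$ into the defining equation for $\Ind(W_1^\Psi)$ together with the commutation identities $W_{12}\check\Psi_{13}W_{12}^*=\check\Psi_{13}\check\Psi_{23}$ and $\Ind(W_1)_{13}^*\Psi'_{12}\Ind(W_1)_{13}=\Psi'_{12}\check\Psi_{23}^*$, and for \eqref{thforb} the transfer via \eqref{kphi} to the untwisted Vaes formula using $(\iota\otimes\Ind(\beta))(\check\Psi)=\Psi_{13}\check\Psi_{12}$. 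The only cosmetic imprecision is attributing the second commutation identity to \eqref{indform1} itself rather than to its right-translation analogue $\Ind(W_1)^*(R_g\otimes 1)\Ind(W_1)=R_g\otimes\Ind(W_1)(g)$, which is what the paper actually invokes.
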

\end{section}
\begin{section}{Rieffel deformation of homogeneous spaces - the quotient case}\label{rdhsec}
Let $\G$ be a locally compact group, $\G_1$ its closed subgroup and let $\X=\G/\G_1 $ be the homogeneous space of left $\G_1$-cosets. The standard action of $\G$ on $\X$  gives rise to the $\mbG$-$\C^*$-algebra $\mbX=(\C_0(\X),\Delta_{\X})$. Suppose that $\Gamma\subset \G_1$ is a closed abelian subgroup and let $\Psi$ be a 2-cocycle on the dual group $\Hat\Gamma$. Using the results of paper \cite{Kasp2} described in Section \ref{rco} we know that $\mbX$ may be deformed to $\mbG^\Psi$-$\C^*$-algebra $\mbX^\Psi$.

On the other hand, applying the Rieffel deformation to the subgroup $\mbG_1\subset\mbG$ we get a closed quantum subgroup $\mbG_1^\Psi\subset\mbG^\Psi$ in the sense of Definition 2.5 of \cite{Vaes}. Let  $\mbG^\Psi/\mbG_1^\Psi$ be the $\C^*$-algebraic quotient space of Definition \ref{cqs}.
The aim of this section is to show that $\mbG^\Psi/\mbG_1^\Psi\cong\mbX^\Psi$.

Let $\iota\in\Mor(\C_0(\mbX),\C_0(\mbG))$ be the standard embedding - it maps a function $f\in\C_0(\mbX)$ to the same function on $\G$ which is constant on the right $\G_1$-cosets. Let $\alpha:\Gamma\rightarrow\Aut(\C_0(\mbX))$ and $\rho:\Gamma^2\rightarrow\Aut(\C_0(\mbG))$ be the actions introduced in Section \ref{rco}. The  action $\rho$ of $\Gamma^2$ on $\C_0(\mbG)$ restricts to the action $\alpha$ on $\C_0(\mbX)$. To be more precise, the second copy of $\Gamma$ in $\Gamma^2$ acts trivially on $\C_0(\mbX)$ and the action of the first copy coincides with $\alpha$.  By the results of  Section 3.2 of \cite{Kasp} the embedding $\iota$ may be twisted to the embedding $\iota^\Psi\in\Mor(\C_0(\mbX^\Psi),\C_0(\mbG^\Psi))$. The comultiplication   $\Delta_{\mbG^\Psi}$   restricts  to the coaction $\Delta_{\mbX^\Psi}$ on $\iota^\Psi(\C_0(\mbX^\Psi))$. The last statement follows from  Theorem 4.11 of \cite{Kasp} and the description of $\Delta_{\mbX^\Psi}$ given in Section \ref{rco}. In particular, $\Delta_{\mbX^\Psi}$ is implemented by the multiplicative unitary:
\begin{equation}\label{imwp} \Delta_{\mbX^\Psi}(a)=W^{\Psi*}(1\otimes a)W^{\Psi},\end{equation} for any $a\in\C_0(\mbX^\Psi)$. 

As was explained in Remark \ref{expr}, the crossed product of $\C_0(\mbG^\Psi/\mbG_1^\Psi)$ by the coaction of $\mbG^\Psi$ coincides with $\mathcal{K}(\Ind(\C_0(\widehat\mbG^\Psi_1)))$.  Using Theorem \ref{indreg} we see that we may identify $\mathcal{K}(\Ind(\C_0(\widehat\mbG^\Psi_1)))$ with $\mathcal{K}(\Ind(\C_0(\widehat\mbG_1)))$, which in turn may be identified with the $\C^*$-algebra $\mbG\ltimes\C_0(\mbG/\mbG_1)$. The former  $\C^*$-algebra may be identified with $B=[\C_0(\widehat\mbG)\C_0(\mbX)]\subset\B(\W^2(\mbG))$ - the $\C^*$-algebra generated  by $\C_0(\widehat\mbG)$ and $\C_0(\mbX)$ inside $\B(\W^2(\mbG))$.  

Under the identification $\mathcal{K}(\Ind(\C_0(\widehat\mbG^\Psi_1)))=B$ the crossed product structure of $\mathcal{K}(\Ind(\C_0(\widehat\mbG^\Psi_1)))$ may be described as follows. Using Eq. \eqref{thforb} one can see that the dual  coaction $\Ind(\beta^\Psi):B\rightarrow\M(B\otimes\C_0(\widehat\mbG^\Psi))$ is implemented by the unitary $\widehat{V}^\Psi$ introduced in \eqref{hatV}. Eq. \eqref{thforw} shows in turn that the induced corepresentation $\Ind(W_1^\Psi)\in\M(\C_0(\mbG^\Psi)\otimes B)$ can be identified with the regular corepresentation  $W^\Psi\in\M(\C_0(\mbG^\Psi)\otimes\C_0(\widehat\mbG^\Psi))\subset\M(\C_0(\mbG^\Psi)\otimes B)$. We are now well prepared to prove
\begin{twr} Let $\mbX^\Psi$ be the Rieffel deformation of the homogeneous space $\mbX$ and $\mbG^\Psi/\mbG^\Psi_1$ the Vaes' quotient considered above. Then $\mbX^\Psi\cong\mbG^\Psi/\mbG^\Psi_1$. 
\end{twr}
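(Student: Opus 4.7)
The plan is to realize both sides as the Landstad--Vaes algebra of a common crossed product. Concretely, by the discussion preceding the theorem, $\mbG^\Psi\ltimes\C_0(\mbG^\Psi/\mbG^\Psi_1)\cong\mathcal{K}(\Ind(\C_0(\widehat\mbG^\Psi_1)))$ is identified with $B=[\C_0(\widehat\mbG)\C_0(\mbX)]\subset\B(\W^2(\mbG))$, under which the dual coaction of $\widehat\mbG^\Psi$ is implemented by $\widehat V^\Psi$ and the induced left regular corepresentation is $W^\Psi$. By Theorem 6.7 of \cite{Vaes}, $\C_0(\mbG^\Psi/\mbG^\Psi_1)$ is uniquely characterized inside $\M(B)$ as the $\C^*$-subalgebra on which the restriction of $\Delta_{\mbG^\Psi}$ is implemented by $W^\Psi$, together with the usual Landstad invariance, density and strictness conditions relative to $\widehat V^\Psi$.

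The core of the argument is then to verify that $\iota^\Psi(\C_0(\mbX^\Psi))\subset\M(B)$ meets exactly these conditions. The coaction identity $\Delta_{\mbX^\Psi}(a)=W^{\Psi*}(1\otimes a)W^\Psi$ of \eqref{imwp} handles one of them directly, so that the restriction of $\Delta_{\mbG^\Psi}$ to $\iota^\Psi(\C_0(\mbX^\Psi))$ coincides with $\Delta_{\mbX^\Psi}$. For the remaining Landstad conditions against $\widehat V^\Psi$ and the density $[\iota^\Psi(\C_0(\mbX^\Psi))\C_0(\widehat\mbG^\Psi)]=B$, I would start from their undeformed counterparts, where the classical identification $\iota(\C_0(\mbX))=\C_0(\mbG/\mbG_1)$ inside $\M(B)$ is the baseline Landstad algebra, and transport them to the twisted setting using the explicit formula $\widehat V^\Psi=\Psi^*\widehat V\Psi'^*$ of \eqref{hatV} and the compatibility between the $\Gamma$-product construction of $\iota^\Psi$ from $\iota$ in \cite{Kasp} and the twist defining $\mbG^\Psi$. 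Since $B$ is unchanged by the deformation (it continues to live inside $\B(\W^2(\mbG))$), the propagation reduces to an algebraic manipulation of the cocycle elements $\Psi$ and $\Psi'$.

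Uniqueness of the Landstad--Vaes algebra then yields $\iota^\Psi(\C_0(\mbX^\Psi))=\C_0(\mbG^\Psi/\mbG^\Psi_1)$ as $\C^*$-subalgebras of $\M(B)$, and the $\mbG^\Psi$-coactions match automatically because each is a restriction of $\Delta_{\mbG^\Psi}$ implemented by $W^\Psi$. The main obstacle I anticipate lies in the invariance step: whereas in the classical setting $\iota(\C_0(\mbX))$ lies in the fixed points of $\Ind(\beta)$ essentially by construction of $B$, the corresponding statement for $\iota^\Psi(\C_0(\mbX^\Psi))$ against $\Ind(\beta^\Psi)$ forces a reconciliation of the cocycle twists sitting inside $\widehat V^\Psi$ with those built into $\iota^\Psi$, and it is here that the skew-symmetry of $\Psi$ and the modular hypothesis $\delta|_\Gamma=1$ (which underpins regularity of $\mbG^\Psi$ and hence the very applicability of Definition \ref{cqs}) will be used in an essential way.
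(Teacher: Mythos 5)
Your proposal follows essentially the same route as the paper: both sides are realized inside the common crossed product $B=[\C_0(\widehat\mbG)\C_0(\mbX)]$, the quotient is pinned down by the uniqueness statement of Theorem 6.7 of \cite{Vaes}, the coaction condition comes from \eqref{imwp}, and the density $[\iota^\Psi(\C_0(\mbX^\Psi))\C_0(\widehat\mbG)]=B$ is transported from the undeformed case. The obstacle you single out --- reconciling the twist in $\Ind(\beta^\Psi)$ with the one built into $\iota^\Psi$ --- is precisely the crux, and the paper resolves it by lifting $\iota$ to a morphism $\iota^\Gamma$ of the $\Gamma$-crossed product, checking the untwisted intertwining on generators and the identity $(\iota^\Gamma\otimes\id)\Psi=\tilde\Psi$; the modular hypothesis enters only through regularity of $\mbG^\Psi$, as you correctly note parenthetically.
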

\begin{proof} By the universal properties of the crossed product $\C^*$-algebra $\Gamma\ltimes_\rho\C_0(\mbX)$ there exists a unique morphism  $\iota^\Gamma\in\Mor(\Gamma\ltimes_\rho\C_0(\mbX),B)$ which is identity on $\C_0(\mbX)\subset\M(\Gamma\ltimes_\rho\C_0(\mbX))$ and such that it sends the unitary generator $u_\gamma\in\M(\C^*(\Gamma))$ to the left shift $L_\gamma\in\M(B)$. Using Theorem 3.6, \cite{Kasp} we may conclude that the restriction of $\iota^\Gamma$ to $\C_0(\mbX^\Psi)$ gives rise to the injective morphism $\iota|_{\C_0(\mbX^\Psi)}$  which we shall denote by $\iota^\Psi\in\Mor(\C_0(\mbX^\Psi),B)$. 
Let $\hat\rho^\Psi$ be the twisted dual action on $\Gamma\ltimes_\rho\C_0(\mbX)$. In what follows we shall interpret it as a coaction $\hat\rho^\Psi\in\Mor(\Gamma\ltimes_\rho\C_0(\mbX),\Gamma\ltimes_\rho\C_0(\mbX)\otimes\C^*(\Gamma))$. Under this interpretation, the relation of $\hat\rho^\Psi$ with its untwisted counterpart $\hat\rho$ is established by the following formula:
\begin{equation}\label{twrho} \hat\rho^\Psi(x)=\Psi^*\rho(x)\Psi
\end{equation} for any $x\in\Gamma\ltimes_\rho\C_0(\mbX)$. Let us note that $\iota^\Gamma$ intertwines the twisted dual coactions:
\begin{equation}\label{intct}\Ind(\beta^\Psi)\circ\iota^\Gamma=(\iota^\Gamma\otimes\id)\circ\hat\rho^\Psi.\end{equation}
In order to see it  we have to observe that:
\begin{itemize}
\item $\iota^\Gamma$ intertwines $\hat\rho$ and $\Ind(\beta)$: \begin{equation}\label{intnt}\Ind(\beta)\circ\iota^\Gamma=(\iota^\Gamma\otimes\id)\circ\hat\rho.\end{equation} Indeed, the equality $\Ind(\beta)(\iota^\Gamma(f))=(\iota^\Gamma\otimes\id)(\hat\rho(f))$ for any $f\in\C_0(\mbX)$ is the consequence of the the simultaneous invariance of $f$ under $\hat\rho$ and $\Ind(\beta)$. Moreover $\Ind(\beta)\circ\iota^\Gamma(u_\gamma)=L_\gamma\otimes L_\gamma=(\iota^\Gamma\otimes\id)\circ\hat\rho(u_\gamma)$ for any $\gamma\in\Gamma$. Using the fact $\C_0(\mbX)$ and $\C^*(\Gamma)$ generate $\Gamma\ltimes_\rho\C_0(\mbX)$ we get a proof of \eqref{intnt}.
\item Let $\tilde\Psi$ be the unitary element introduced in \eqref{indpsi}. Note that $(\iota^\Gamma\otimes\id)\Psi=\tilde\Psi$. 
\end{itemize}
Using the above observations and  Eqs. \eqref{thforb}, \eqref{twrho} one easily gets \eqref{intct} which in turn implies that $\iota^\Psi(\C_0(\mbX^\Psi))\subset\M(B)^{\Ind(\beta^\Psi)}$. 
The proof of the equality  $\iota^\Psi(\C_0(\mbX^\Psi))=\C_0(\mbG^\Psi/\mbG_1^\Psi)$ is based on Theorem 6.7 of \cite{Vaes}. Its application requires the check of the following two conditions:
\begin{itemize}
\item The map $x\mapsto W^{\Psi*}(1\otimes x)W^\Psi$ defines a continuous coaction on $\iota^\Psi(\C_0(\mbX^\Psi))$. Indeed, this follows from \eqref{imwp}.
\item We have $[\iota^\Psi(\C_0(\mbX^\Psi))\C_0(\widehat\mbG)]=B$. In order to see that we compute \begin{align*}[\iota^\Psi(\C_0(\mbX^\Psi))\C_0(\widehat\mbG)]&=[\iota^\Psi(\C_0(\mbX^\Psi))\C^*(\Gamma)\C_0(\widehat\mbG)]=[\iota^\Gamma(\Gamma\ltimes_\rho\C_0(\mbX^\Psi))\C_0(\widehat\mbG)]\\&=[\iota^\Gamma(\Gamma\ltimes_\rho\C_0(\mbX))\C_0(\widehat\mbG)]=[\iota^\Gamma(\C_0(\mbX))\C^*(\Gamma)\C_0(\widehat\mbG)]\\&=[\C_0(\mbX)\C_0(\widehat\mbG)]=B\end{align*} where in the first and the fourth equality we used the fact that $[\C^*(\Gamma)\C_0(\widehat\mbG)]=\C_0(\widehat\mbG)$ and in the third equality we used the fact that $\Gamma\ltimes_\rho\C_0(\mbX^\Psi)=\Gamma\ltimes_\rho\C_0(\mbX)$ (note that the action of $\Gamma$ on $\C_0(\mbX)$ and on $\C_0(\mbX^\Psi)$ is denoted by the same $\rho$). 
\end{itemize} This ends the proof of the isomorphism $\mbX^\Psi\cong \mbG^\Psi/\mbG^\Psi_1$.  
\end{proof}
\end{section}
\begin{section}{Rieffel deformation of $\mbG$-simple $\C^*$-algebras}\label{nonq}
Let $\mbG$ be a locally compact quantum group corresponding to a locally compact group $\G$.
The aim of this section is to prove that the Rieffel deformation $\mbX^\Psi$ of a $\mbG$-simple $\C^*$-algebra $\mbX$ is $\mbG^\Psi$-simple (see Definition \ref{homspdef}). In particular, the Rieffel deformation of a quotient space $\X=\G/\G_1$ (also in the case when $\Gamma$ is a subgroup of $\G$ but not of $\G_1$) is $\mbG^\Psi$-simple. The idea of the following proof  is based in the functorial properties of the Rieffel deformation (see Section 3.2, \cite{Kasp}).
\begin{twr} Let $\mbX$ be a $\mbG$-$\C^*$-algebra. Let $\Gamma$ be a closed abelian subgroup of $\mbG$ and $\Psi$ a 2-cocycle on the dual group $\hat\Gamma$. The Rieffel deformation $\mbX^\Psi$ of $\mbX$ is  $\mbG^\Psi$-simple.
\end{twr}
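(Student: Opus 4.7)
The plan is to deduce the result from the functorial properties of the Rieffel deformation developed in Section 3.2 of \cite{Kasp}. Recall that the assignment $\mbD\mapsto\mbD^\Psi$ extends to a functor from the category of $\mbG$-$\C^*$-algebras to the category of $\mbG^\Psi$-$\C^*$-algebras, sending any $\mbG$-morphism $\sigma\in\Mor_{\mbG}(\mbD,\mbB)$ to a $\mbG^\Psi$-morphism $\sigma^\Psi\in\Mor_{\mbG^\Psi}(\mbD^\Psi,\mbB^\Psi)$; moreover the inverse cocycle $\bar\Psi$ provides a quasi-inverse functor $({\,\cdot\,})^{\bar\Psi}$ together with a canonical natural isomorphism $(\mbD^\Psi)^{\bar\Psi}\cong\mbD$. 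In this sense the two deformations form an equivalence of the respective categories.

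I argue by contradiction. Suppose $\mbX^\Psi$ is not $\mbG^\Psi$-simple: then there exist a $\mbG^\Psi$-$\C^*$-algebra $\mbB$ and a $\mbG^\Psi$-morphism $\pi\in\Mor_{\mbG^\Psi}(\mbX^\Psi,\mbB)$ with $\ker\pi\neq\{0\}$. Applying the inverse deformation and invoking $(\mbX^\Psi)^{\bar\Psi}\cong\mbX$ yields a $\mbG$-morphism $\pi^{\bar\Psi}\in\Mor_{\mbG}(\mbX,\mbB^{\bar\Psi})$. Combined with the $\mbG$-simplicity of $\mbX$, this gives the desired contradiction as soon as one verifies that $\ker\pi^{\bar\Psi}\neq\{0\}$, i.e.\ that the deformation functor reflects injectivity at the level of $\C^*$-morphisms.

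To establish this reflection property I would work inside the common crossed product. By the construction of \cite{Kasp}, both $\C_0(\mbX)$ and $\C_0(\mbX^\Psi)$ embed injectively into $\Gamma\ltimes_\alpha\C_0(\mbX)$ as the Landstad subalgebras for the ordinary and the $\bar\Psi$-deformed dual action respectively, and similarly for $\mbB^{\bar\Psi}$ and $\mbB$. The morphism $\pi$ extends, via the universal property of the crossed product, to a morphism of the ambient crossed products intertwining both pairs of dual actions; its restrictions to the two Landstad subalgebras recover $\pi^{\bar\Psi}$ and $\pi$ respectively. Consequently a nonzero element of $\ker\pi$ yields, through the categorical bijection, a nonzero element of $\ker\pi^{\bar\Psi}$, completing the contradiction.

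The main obstacle is precisely this last step: a careful bookkeeping that transports nontrivial kernels faithfully through the $({\,\cdot\,})^\Psi$--$({\,\cdot\,})^{\bar\Psi}$ equivalence. It is not purely formal, because the two Landstad subalgebras sit inside the same crossed product in different ways and injectivity must be chased through the Landstad characterisation of \cite{Kasp}. Once reflection of injectivity is established, the conclusion is immediate.
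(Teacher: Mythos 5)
Your argument is correct and follows essentially the same route as the paper: deform $\pi$ back by the inverse cocycle $\bar\Psi$ to obtain a $\mbG$-morphism $\pi^{\bar\Psi}\in\Mor_{\mbG}(\mbX,\mbB^{\bar\Psi})$, invoke the $\mbG$-simplicity of $\mbX$, and transfer (non)triviality of the kernel between $\pi$ and $\pi^{\bar\Psi}$. The kernel-transfer step you single out as the main obstacle is exactly what the paper delegates to Proposition 3.8 of \cite{Kasp}, so your crossed-product/Landstad sketch need not be carried out from scratch.
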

\begin{proof}
Let $\mathbb{B}$ be a $\mbG^\Psi$-$\C^*$-algebra and let $\pi\in\Mor_{\mbG^\Psi}(\mbX^\Psi,\mathbb{B})$. The Rieffel deformation $\Gamma^\Psi$ of $\Gamma$ is a quantum closed subgroup of $\mbG^\Psi$. Actually, $\Gamma^\Psi=\Gamma$ which easily follows from the abelianity of $\Gamma$ (see Appendix B of \cite{Kasp3}). Let $\alpha:\Gamma\rightarrow\Aut(\C_0(\mbX^\Psi))$ be the action that corresponds to the $\Gamma$-restriction of the coaction $\Delta_{\mbX^\Psi}$. Similarly, we may introduce $\beta:\Gamma\rightarrow\Aut(\C_0(\mathbb{B}))$. Obviously the morphism $\pi$ intertwines $\alpha$ and $\beta$:
$\pi\circ\alpha_{\gamma}=\beta_{\gamma}\circ\pi$ for any $\gamma\in\Gamma$. The deformation data $(\C_0(\mathbb{B}),\beta,\bar\Psi)$ gives rise to the deformed  $\C^*$-algebra that we shall denote by $\C_0(\mathbb{B}^{\bar\Psi})$ and the deformed morphism $\pi^{\bar\Psi}\in\Mor(\C_0(\mbX),\C_0(\mathbb{B}^{\bar\Psi}))$. Note that $(\mbG^{\Psi})^{\bar\Psi}=\mbG$. Using the ideas presented in Section \ref{rco} we may construct the twisted coaction $\Delta_{\mathbb{B}}^{\bar\Psi}$ of $\mbG$ on $\mathbb{B}^{\bar\Psi}$ and check that $\pi^{\bar\Psi}$ is a $\mathbb{G}$-morphism. 
The $\mbG$-simplicity of $\mbX$ implies that $\ker\pi^{\bar\Psi}=\{0\}$. Using Proposition 3.8 \cite{Kasp} we see that $\ker\pi=\{0\}$, which ends the proof of $\mbG^\Psi$-simplicity of $\mbX^\Psi$ .
\end{proof}
\end{section}

\end{document}